\def\Z{{\mathbb Z}}
\def\Q{{\mathbb Q}}
\def\P{{\mathbb P}}
\def\K{{\mathbb K}}
\def\OO{{\mathcal O}}
\def\FF{\mathcal{F}}
\def\GG{\mathcal{G}}
\def\Pic{{Pic}}
\def\Pic0{\mathrm{Pic}^0}
\def\max{\mathrm{max}}
\def\deg{\mathrm{deg}}
\def\Char{\mathrm{char}}
\def\Ker{\mathrm{Ker}}
\theoremstyle{plain}
\newtheorem*{introcorollary}{Corollary}
\newtheorem*{introconjecture}{Conjecture}
\newtheorem{theorem}{Theorem}[section]
\newtheorem{theoremalpha}{Theorem}
\newtheorem{corollaryalpha}[theoremalpha]{Corollary}
\newtheorem{proposition/example}[theorem]{Proposition/Example}
\newtheorem{definition/theorem}[theorem]{Definition/Theorem}
\newtheorem{proposition}[theorem]{Proposition}
\newtheorem{corollary}[theorem]{Corollary}
\newtheorem{lemma}[theorem]{Lemma}
\theoremstyle{definition}
\newtheorem{definition}[theorem]{Definition}
\newtheorem{remark}[theorem]{Remark}
\newtheorem{example}[theorem]{Example}
\newtheorem{conjecture/question}[theorem]{Conjecture/Question}
\newtheorem{remark/definition}[theorem]{Remark/Definition}
\newtheorem{notation/assumptions}[theorem]{Assumptions/Notation}
\numberwithin{equation}{section}
\theoremstyle{remark}
\begin{document}  

\title{On Mukai's conjecture for hyperelliptic varieties}

 \author{Federico Caucci}
\address{Sapienza Universit\`a di Roma, Dipartimento di Scienze di Base e Applicate per l'Ingegneria, Via Antonio Scarpa 16, 00161 Roma, Italy}
 \email{federico.caucci@uniroma1.it}

\maketitle

\setlength{\parskip}{.1 in}

\begin{abstract} 
We prove some general  results on syzygies of smooth projective varieties with numerically trivial canonical line bundle. This allows to
 confirm  several cases of Mukai's syzygies conjecture for 
 finite quotients of  abelian varieties 
 in any dimension, and  in positive characteristic.     
 \end{abstract}

\section{Introduction}
  
When studying syzygies of projective varieties, a  difficult  
conjecture attributed to Mukai has been of pivotal importance. It says the following:
\begin{introconjecture}
Let $X$ be a smooth projective variety, $k \geq 0$ be an integer, and $L$ be an ample line bundle on $X$. If
\[
m \geq \dim X + 2 + k,
\]
then $K_X + mL$ satisfies the property $(N_k)$.
\end{introconjecture}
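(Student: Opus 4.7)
The plan is to attempt the statement via the standard reduction to vanishings of Koszul cohomology, while being upfront that Mukai's conjecture in full generality is a well-known open problem; what follows is the natural strategy and an identification of where the main obstruction sits.

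First I would recall the cohomological criterion for property $(N_k)$. Setting $A = K_X + mL$ and letting $M_A$ denote the kernel of the evaluation map $H^0(X, A) \otimes \OO_X \twoheadrightarrow A$, it is standard that $A$ satisfies $(N_k)$ provided $A$ is very ample, defines a projectively normal embedding, and
\[
H^1\bigl(X, \wedge^{p+1} M_A \otimes A^{\otimes s}\bigr) = 0 \qquad \text{for all } 0 \le p \le k,\ s \ge 1.
\]
Next I would dispose of the base case $(N_0)$ for $m \ge \dim X + 2$ via Castelnuovo–Mumford regularity with respect to $L$: Kodaira vanishing gives $H^i(X, K_X + jL) = 0$ for $i,j \ge 1$, which makes $K_X + (\dim X + 1) L$ be $0$-regular with respect to $L$, and the usual Mumford argument upgrades this to projective normality of $K_X + (\dim X + 2)L$.

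The heart of the proof would then be an induction on $k$. Using the tautological exact sequence $0 \to M_A \to H^0(A)\otimes \OO_X \to A \to 0$ and its exterior powers, one reduces the required vanishing to statements of the form $H^{p+1}(X, A^{\otimes (s-p)}) = 0$ together with auxiliary vanishings for $\wedge^{j} M_A \otimes A^{\otimes t}$ with smaller $j$. The hope is that each extra factor of $L$ appearing in the decomposition $A = (K_X + (\dim X + 2) L) + (m - \dim X - 2) L$ buys one additional vanishing via Kodaira, so that $k$ extra twists of $L$ suffice to reach $(N_k)$.

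The main obstacle — and the reason the conjecture remains open — is exactly the control of $\wedge^{p+1} M_A$: there is no general effective vanishing theorem for $H^1(X, \wedge^{p+1} M_A \otimes A^{\otimes s})$, since $\wedge^{p+1} M_A$ carries no a priori positivity. The naive Koszul/flag argument loses a factor of roughly $\dim X$ at each level of the induction, yielding a bound like $m \ge 1 + (k+1)(\dim X + 1)$ rather than the sharp $m \ge \dim X + 2 + k$. Closing this gap in full generality appears to demand an input beyond Kodaira vanishing — for instance, generic vanishing together with Fourier–Mukai methods in the presence of a large Albanese, which is precisely the route the body of the paper takes under the hypothesis that $K_X$ is numerically trivial. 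Without such extra structure I do not see how to achieve the sharp bound, and I expect any honest proof of the conjecture as stated to require a genuinely new idea here.
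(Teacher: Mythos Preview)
The statement you were given is Mukai's \emph{conjecture}, which the paper presents explicitly as open and does not prove; the paper only establishes special cases (Theorems~A--D) under additional hypotheses such as $K_X\equiv 0$ or $X$ hyperelliptic. Your proposal is therefore not in competition with any proof in the paper, and your candid acknowledgment that the conjecture remains open is the correct conclusion.

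One genuine correction to your sketch, though: you cannot ``dispose of the base case $(N_0)$'' in the way you describe. Mumford's lemma on Castelnuovo--Mumford regularity requires the polarizing line bundle to be \emph{globally generated}, whereas in Mukai's conjecture $L$ is merely ample. So the Kodaira vanishings you cite do make $K_X+(\dim X+1)L$ formally $0$-regular with respect to $L$, but nothing follows about multiplication maps unless $|L|$ has no base locus. Indeed, as the paper itself points out, even the $k=0$ case of the conjecture for surfaces is not known in general. The Ein--Lazarsfeld/Bangere--Lacini result, and the paper's Theorem~B, all assume the polarizing bundle is ample \emph{and} globally generated precisely for this reason. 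So the obstruction you locate for higher $k$ (no positivity control on $\wedge^{p+1}M_A$) is real, but it is not the first one: already the lack of base-point-freeness of $L$ blocks the argument at the projective-normality step.
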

We refer the reader to \S \ref{S1} for the definition and the geometric meaning of property $(N_k)$. We recall that, by definition, $(N_k)$ implies $(N_{k'})$ for all $0 \leq k' \leq k$. 

Mukai's  conjecture 
 should be seen as a strong generalization of Fujita's conjecture \cite{fuconj}, and it is  very much open,  up to now. 
 Apparently, Mukai was motivated by Green's theorem on higher syzygies for curves \cite{green}. 
  Even the $k=0$ case for surfaces rests unknown in general, while, for instance, Fujita's conjecture  holds true in this case,  by Reider's theorem \cite{reider}. 
On the other hand, some special cases -- with actually better bounds -- were proved. When $S$ is an abelian or a K3 surface, it is known that
$mL$ has the property $(N_k)$ as soon as $m \geq k+3$ (the former  holds true for abelian varieties of arbitrary dimension by \cite{pa1}, while the latter is \cite[Corollary 4.7]{akl}). 
If $S$ is a complex minimal surface of Kodaira dimension zero,
one of the main results of Gallego and Purnaprajna
\cite{gapu} says that    $K_S + mL$ has the property $(N_k)$, assuming $m \geq \max\{4, 2k+2\}$. 

Our first result is a  proof of this  last fact for
 bielliptic surfaces, which works as well in positive characteristic: 
\begin{theoremalpha}\label{thmM}
Let $S$ be a bielliptic surface defined over an algebraically closed field $\K$ of characteristic $\neq 2, 3$. Let
$L$ be an ample line bundle on $S$, and 
 $m, k$  be non-negative integers such that 
$$m \geq \max \{4, 2k+2\}\, .$$
Then,  $K_S + m L$ satisfies the property $(N_k)$.  
\end{theoremalpha}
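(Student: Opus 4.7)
The approach I would take is to lift the problem to the canonical abelian surface cover of $S$ and invoke Pareschi's theorem \cite{pa1}.

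Under the hypothesis $\Char(\K) \neq 2, 3$, the Bagnera--De Franchis classification realizes $S$ as an étale quotient $\pi : A \to S = A/G$, where $A$ is an abelian surface and $G$ is a finite group with $|G| = d := \ord(K_S) \in \{2, 3, 4, 6\}$. Since $\pi$ is the cyclic étale cover associated to the torsion class $K_S$, one has $\pi^* K_S \cong \OO_A$, and therefore $\pi^*(K_S + mL) \cong m \pi^* L$, with $\pi^* L$ ample on $A$. The Galois action yields $\pi_* \OO_A = \bigoplus_{i=0}^{d-1} K_S^{-i}$ and correspondingly $H^0(A, m \pi^* L) = \bigoplus_{i=0}^{d-1} H^0(S, (1-i)K_S + mL)$.

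The central step is an equivariant descent. I plan to apply the paper's general results on varieties with numerically trivial canonical bundle to reduce property $(N_k)$ for $K_S + mL$ on $S$ to $M$-regularity and cohomological vanishing statements for $m\pi^*L$ on $A$, tracked across the $G$-isotypic decomposition above. These vanishings are then supplied on $A$ by Pareschi's theorem and the attendant $M$-regularity machinery, which give $(N_k)$ for $m\pi^*L$ as soon as $m \geq k+3$, together with the stronger vanishings for the twisted Koszul bundles $\wedge^{p+1} M_{m\pi^*L} \otimes (m\pi^*L)^{\otimes s} \otimes \pi^*(K_S^{\otimes i})$ needed on the non-invariant $G$-characters.

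Chasing constants through this descent should yield the bound $m \geq \max\{4, 2k+2\}$: the extra $+1$ over Pareschi's value $k+3 = 3$ in the $(N_0)$ case reflects that projective normality of $K_S + mL$ on $S$ requires surjectivity of multiplication maps separately in each $G$-character, forcing one additional copy of $L$ beyond what bare normal generation of $m\pi^*L$ on $A$ requires; the linear coefficient $2$ in $2k+2$ arises because each Koszul step in the descent consumes one copy of $\pi^*L$ for Pareschi-type vanishing and a second one to control the non-invariant sections appearing in the projection formula.

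The main obstacle is the equivariant bookkeeping. Pullback does not identify $\pi^* M_{K_S + mL}$ with $M_{m\pi^*L}$: the latter is strictly larger, the difference being accounted for by the non-invariant summands of $H^0(A, m\pi^*L)$. Translating the Pareschi vanishings on $A$ into the vanishings required for the smaller bundle $\pi^* M_{K_S + mL}$ --- equivalently, into the $G$-invariant part of the Koszul cohomology of $m\pi^*L$ on $A$ --- is the delicate point, and is where the paper's general framework for numerically trivial canonical bundles does the heavy lifting.
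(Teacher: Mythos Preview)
Your proposal correctly sets up the canonical cover $\pi\colon A\to S$ and correctly identifies the central difficulty: $\pi^*M_{L_m}$ is only a \emph{subbundle} of $M_{\pi^*L_m}$, with cokernel a nonzero trivial bundle coming from the non-invariant summands of $H^0(A,\pi^*L_m)$. But you do not actually resolve this obstacle; you defer it to ``the paper's general framework'' and to ``chasing constants''. This is precisely the gap. Pareschi's theorem and the $M$-regularity machinery control the cohomology of tensor powers of $M_{\pi^*L_m}$, and those vanishings do \emph{not} transfer to $\pi^*M_{L_m}$: the inclusion $\pi^*M_{L_m}\hookrightarrow M_{\pi^*L_m}$ is not split (indeed, a splitting would force nonzero global sections of $M_{\pi^*L_m}$, which is impossible), so there is no direct way to extract the $G$-invariant part of the Koszul cohomology from Pareschi's result. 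The paper itself flags this as the error in an earlier attempt in the literature.

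The paper's proof bypasses this entirely. It never tries to descend vanishings for $M_{\pi^*L_m}$; instead it proves directly that $\pi^*M_{L_m}\otimes\pi^*L_2$ is $GV$ on $A$. The mechanism is Castelnuovo--Mumford regularity \emph{on $S$}: via the splitting $\pi_*\OO_A=\bigoplus\omega_S^r$, the required $H^1$-vanishing on $A$ decomposes into vanishings $H^1(S,M_{L_m}\otimes L_2\otimes\beta_0\otimes\omega_S^r)=0$, each of which follows from Mumford's lemma once one checks that $L_m$ is regular with respect to $L_2\otimes\beta_0\otimes\omega_S^r$ for a suitably chosen $\beta_0\in\Pic0 S$. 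The bound $m\ge 4$ arises from the $H^2$-vanishing in this regularity check (via Serre duality and the fact that $\omega_S\in\Pic0 S$), and the bound $m\ge 2k+2$ comes from iterating: once $\pi^*(M_{L_m}\otimes L_2)$ is $GV$, Proposition~\ref{presvan} makes $\pi^*M_{L_m}^{\otimes(k+1)}\otimes\pi^*L_m$ an $IT(0)$ sheaf when $m>2k+2$, with the borderline $m=2k+2$ handled by a separate inductive argument using the global generation criterion of \cite{chiIy}. So the two constants you guessed have the right values but for reasons different from the ones you sketched.
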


\begin{introcorollary}
Mukai's conjecture for a  bielliptic surface as above,
holds true when $k \leq 2$.
\end{introcorollary}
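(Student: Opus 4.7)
The plan is to deduce the corollary directly from Theorem A by a numerical comparison of the two hypotheses. Mukai's conjecture for a surface $S$ predicts that $K_S+mL$ satisfies $(N_k)$ as soon as $m \geq \dim S + 2 + k = k+4$, while Theorem A ensures $(N_k)$ under the hypothesis $m \geq \max\{4,\, 2k+2\}$. So the corollary reduces to checking the elementary inequality
\[
\max\{4,\; 2k+2\} \;\leq\; k+4 \qquad \text{for every } k \in \{0,1,2\},
\]
which would imply that any $m$ satisfying Mukai's hypothesis automatically satisfies the hypothesis of Theorem A.

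First I would verify this case by case: for $k=0$, $\max\{4,2\}=4\leq 4$; for $k=1$, $\max\{4,4\}=4\leq 5$; for $k=2$, $\max\{4,6\}=6\leq 6$. In each case the inequality holds (with equality when $k$ is even), so invoking Theorem A yields the property $(N_k)$ for all $m \geq k+4$, which is exactly Mukai's prediction in the stated range.

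There is no genuine obstacle: the entire mathematical content sits in Theorem A, and the corollary is essentially a bookkeeping remark reading off the constants. I would also point out for context that the comparison breaks at $k=3$, where $2k+2=8 > 7 = k+4$; this explains why Theorem A does not by itself recover Mukai's conjecture for $k \geq 3$, and why the threshold $k=2$ in the corollary is the best achievable directly from Theorem A.
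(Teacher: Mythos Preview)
Your argument is correct and matches the paper's reasoning: the corollary is stated without a separate proof, as it is an immediate numerical consequence of Theorem~A via exactly the comparison $\max\{4,2k+2\}\leq k+4$ for $k\in\{0,1,2\}$ that you carry out. Your additional observation that the inequality fails at $k=3$ is a nice clarification of why the corollary stops there.
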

\noindent As the reader will notice, 
  the  argument   also proves that, under the same numerical assumption, $m L$  satisfies the property $(N_k)$ as well. Moreover, it properly generalizes to higher dimensions.

Let $A$ be an abelian variety, defined over an algebraically closed field, and 
 $G$ be a finite group  acting freely and not only by translations on $A$.  The quotient variety  $$X := A /G$$ is a \emph{hyperelliptic variety}, which is a higher dimensional version of a bielliptic surface.
Hyperelliptic varieties, or, 
 more generally, non-necessarily smooth, 
finite quotients of abelian varieties, have  attracted attention from different perspectives along the years. Over the complex numbers, 
smooth finite quotients are
 classified in dimension $\leq 2$ by \cite{dim2.1, dim2.2, sh}, and partially in dimension $3$, where we have a complete classification of  the hyperelliptic threefolds (see \cite{dim3}, and the references therein). Indeed, recently there has been a renewed interest in such varieties: see, for instance,  \cite{na, clhoko, grkepe, luta, casurvey, catanese} and the references therein, and especially the 
more recent works of Catanese \cite{catsns, catii}.
First steps towards a classification of hyperelliptic $4$-folds have been    
obtained in \cite{dem}.  
We   refer the reader to  \cite[\S 7]{ueno} and \cite{serrano, ogsa,   
kola,  ala}  for other interesting results on, and   non-trivial examples of,  finite quotients of abelian varieties.  
We also like to mention that hyperelliptic varieties provide interesting examples from the viewpoint of derived categories  (see \cite[\S 4]{kaok} and \cite{bedenu}).

In the paper \cite{chiIy}  the following result is claimed.  
Given an ample line bundle $L$ on a complex hyperelliptic variety $X$, \cite[Theorem 1.3]{chiIy} states that $m L$ has the property $(N_k)$ if $m \geq k+3$.    
Unfortunately, the proof in \cite{chiIy} contains a gap.\footnote{Indeed, the proof of \cite[Lemma 6.3]{chiIy} -- upon which \cite[Theorem 1.3]{chiIy} crucially builds on -- is incorrect as it is stated there that the sub-bundle $\pi^*M_{mL}$
 of the kernel bundle $M_{\pi^*mL}$ on $A$ is a direct summand, i.e., the inclusion $\pi^*M_{mL} \subseteq M_{\pi^*mL}$  splits,
where $\pi \colon A \to X$ is the quotient map, $M_{mL}$  denotes the kernel of the evaluation morphism of global sections $H^0(X, mL) \otimes \OO_X \xrightarrow{\mathrm{ev}} mL$ on $X$, and similarly for
$M_{\pi^*mL}$ on $A$.  
 However, this cannot be the case as, by definition, $M_{\pi^*mL}$ has no non-trivial global sections, while 
 the cokernel of the inclusion $\pi^*M_{mL} \subseteq M_{\pi^*mL}$ is a certain trivial vector bundle $\bigoplus \OO_A$ which is, in general, non-zero. Therefore, the above splitting, if true,    would give that $\bigoplus \OO_A$ is contained in $M_{\pi^*mL}$, and hence $$0 < h^0(A, \bigoplus \OO_A) \leq h^0(A, M_{\pi^*mL}) = 0.$$  
}  
The present paper originated as an attempt to prove (or even disprove) 
such statement, which should be now considered as an open problem. 
In the end, things seem to be  more complicated than one thought, and  
although we have not been able  to  solve   
this problem, we prove some weaker results in arbitrary dimension (see  Theorems \ref{thmC'} and \ref{thmC}  below). 

Let us first state the  second main result of the present paper.  
Recently,  Lacini-Purnaprajna \cite{bala} proved a general theorem on syzygies of smooth complex projective varieties, which answers affirmatively a question
 of Ein-Lazarsfeld \cite[\S 4]{el}: let $Y$ be a complex projective variety of dimension $d$, and let $P$ be an ample and \emph{globally generated} line bundle on $Y$, then $K_Y + mP$ has the property $(N_k)$ if $m \geq d +   1 + k$. 

Assuming $K_Y \equiv 0$, we may improve \cite{bala} via  a fairly elementary argument that only uses Castelnuovo-Mumford regularity, and Kodaira's vanishing (see also Remark \ref{NEF} below): 
\begin{theoremalpha}[$\Char(\K) = 0$]\label{thmY}  
Let $Y$ be a smooth projective variety of dimension $d$, defined over an algebraically closed field $\K$ of characteristic $0$, such that $K_Y$ is numerically trivial. Let $P$ be an ample and globally generated line bundle on $Y$, such that $N+P$ is globally generated for a 
   numerically trivial line bundle $N$ on $Y$. Then, $N + mP$ has the property $(N_k)$, if
\[
m \geq \max\{d+1, k+1\}\, .
\]
\end{theoremalpha}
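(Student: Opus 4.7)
The plan is to verify Green's cohomological criterion for property $(N_k)$: under the basic vanishing $H^i(Y, L^{\otimes j}) = 0$ for all $i, j \geq 1$, the property $(N_k)$ for $L$ is equivalent to
$$H^1(Y, \textstyle\bigwedge^r M_L \otimes L^{\otimes j}) = 0 \qquad \text{for } 1 \leq r \leq k+1,\, j \geq 1, \qquad (\star)$$
where $V := H^0(Y,L)$ and $M_L := \ker(V \otimes \OO_Y \twoheadrightarrow L)$ is the kernel bundle. Setting $L := N + mP$, the decomposition $L = (N+P) + (m-1)P$ --- with $N+P$ globally generated by hypothesis and ample (numerically trivial $+$ ample is ample), and $P$ ample --- shows that $L$ is ample and globally generated. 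In characteristic zero, applying Kodaira vanishing to $L^{\otimes j} - K_Y$ (numerically equivalent to the ample $L^{\otimes j}$, hence ample, since $K_Y \equiv 0$) yields $H^i(Y, L^{\otimes j}) = 0$ for all $i \geq 1, j \geq 1$, and the problem reduces to $(\star)$.

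To prove $(\star)$, I take the $(r+1)$-st exterior power of the tautological sequence $0 \to M_L \to V \otimes \OO_Y \to L \to 0$ and twist by $L^{\otimes(j-1)}$:
$$0 \to \textstyle\bigwedge^{r+1} M_L \otimes L^{\otimes(j-1)} \to \bigwedge^{r+1} V \otimes L^{\otimes(j-1)} \to \bigwedge^r M_L \otimes L^{\otimes j} \to 0.$$
For $j \geq 2$, the basic vanishing above forces $H^1(L^{\otimes(j-1)}) = H^2(L^{\otimes(j-1)}) = 0$, so the long exact sequence gives $H^1(\textstyle\bigwedge^r M_L \otimes L^{\otimes j}) \simeq H^2(\textstyle\bigwedge^{r+1} M_L \otimes L^{\otimes(j-1)})$. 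Iterating $j-1$ times reduces $(\star)$ to the terminal vanishings
$$H^j(Y, \textstyle\bigwedge^{r+j-1} M_L \otimes L) = 0 \qquad \text{for } 1 \leq r \leq k+1,\, j \geq 1, \qquad (\star\star)$$
which are automatic as soon as $j > d := \dim Y$.

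For the remaining range $1 \leq j \leq d$, I invoke the identity $\det M_L \cong L^{-1}$ (which follows since $\det(V \otimes \OO_Y) \cong \OO_Y$), yielding the isomorphism $\textstyle\bigwedge^r M_L \otimes L \cong \textstyle\bigwedge^{n-r} M_L^\vee$, where $n := h^0(L) - 1$. Thus $(\star\star)$ rewrites as $H^j(\textstyle\bigwedge^{n-r-j+1} M_L^\vee) = 0$. Since $M_L^\vee$ is a quotient of the trivial bundle $V^\vee \otimes \OO_Y$ via the dual sequence $0 \to L^{-1} \to V^\vee \otimes \OO_Y \to M_L^\vee \to 0$, it is globally generated (and so are its exterior powers). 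I then proceed by the Koszul-type filtration coming from this short exact sequence, combined with the further splitting $L = (N+P) + (m-1)P$: peeling off one copy of $P$ at a time from the twist, each auxiliary cohomology eventually lands in a Kodaira-vanishing situation on $Y$, using that $N + aP$ is ample for every $a \geq 1$. The hypothesis $m \geq d+1$ provides enough copies of $P$ to execute the reduction up to the cohomological dimension $d$, while $m \geq k+1$ keeps the wedge degree $r \leq k+1$ within the range where the bookkeeping closes.

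\textbf{Main obstacle.} The technical heart of the argument lies in the terminal vanishing $(\star\star)$ for $1 \leq j \leq d$. The exterior powers of $M_L^\vee$ are globally generated but not ample in general, so one cannot directly invoke vanishing theorems for ample vector bundles. The Koszul filtration inevitably introduces terms of the form $\textstyle\bigwedge^\bullet V^\vee \otimes H^i(\OO_Y)$, which are typically nonzero on varieties with $K_Y \equiv 0$ (e.g.\ abelian-variety–like $Y$); the delicate point is to track how the splitting $L = (N+P) + (m-1)P$ interacts with these terms so that every intermediate cohomology is brought back into the Kodaira range. The numerical bound $m \geq d+1$ is precisely what is needed to carry this reduction through, while $m \geq k+1$ provides the complementary control on the syzygy degree.
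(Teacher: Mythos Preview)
Your reduction from $(\star)$ to $(\star\star)$ is correct, but at that point the argument stalls, and what you write afterwards is not a proof. After dualizing to $H^j(\bigwedge^{t} M_L^\vee)$ there is no twist left from which to ``peel off'' copies of $P$, and the Koszul sequence
\[
0 \to L^{-1}\otimes \textstyle\bigwedge^{t-1} M_L^\vee \to \textstyle\bigwedge^t V^\vee \otimes \OO_Y \to \textstyle\bigwedge^t M_L^\vee \to 0
\]
only relates $H^j(\bigwedge^t M_L^\vee)$ to $H^{j+1}(L^{-1}\otimes\bigwedge^{t-1}M_L^\vee)$ through the terms $\bigwedge^t V^\vee \otimes H^j(\OO_Y)$ and $\bigwedge^t V^\vee \otimes H^{j+1}(\OO_Y)$, which on a variety with $K_Y\equiv 0$ are typically nonzero (as you yourself note). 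You never explain how the decomposition $L=(N+P)+(m-1)P$ interacts with $M_L$ so as to kill these contributions, and I do not see a way to make this work: the kernel bundle $M_L$ is built from $H^0(L)$, not from $H^0(P)$, so there is no evident filtration on $\bigwedge^\bullet M_L$ whose graded pieces are controlled by Kodaira vanishing for powers of $P$. Your ``Main obstacle'' paragraph is, in effect, a statement of the gap rather than a resolution of it. Note also that your reduction pushes the wedge degree up to $r+j-1\le k+d$, far beyond the range $r\le k+1$ where the hypothesis $m\ge k+1$ gives direct control, and you do not say how the bookkeeping closes.

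The paper avoids this difficulty by never leaving the regime of positive twists. Working with tensor powers rather than exterior powers, it proves by induction on $i$ that $H^j(Y, M_{P_m}^{\otimes(i+1)}\otimes P_{m'})=0$ for all $j\ge 1$ whenever $m,m'\ge d+1$ and $m'\ge i+1$. The inductive step amounts to the surjectivity of the multiplication map
\[
H^0(E)\otimes H^0(P_m)\longrightarrow H^0(E\otimes P_m),\qquad E:=M_{P_m}^{\otimes i}\otimes P_{m'},
\]
which is obtained by factoring $P_m=(N+P)+P+\cdots+P$ and applying Mumford's lemma (Castelnuovo--Mumford regularity) to each factor in turn: one checks $H^l(E\otimes P^{-l})=0$ for $1\le l\le d$, and these vanishings reduce, via the defining sequence of $M_{P_m}$, to the inductive hypothesis together with Kodaira vanishing for $P_{m'-l}$ (here $m'\ge d+1$ is exactly what is needed). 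Because every sheaf in sight is twisted by a genuinely ample line bundle, the cohomology of $\OO_Y$ never enters. This is the missing idea in your approach.
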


Since for a hyperelliptic variety $X = A/G$ (defined over $\K$ with $\Char(\K)$ not dividing the cardinality of the acting group $G$) the tensor product of two  ample line bundles on $X$ is globally generated, 
 from Theorem \ref{thmY} and its proof we get:
\begin{theoremalpha}\label{thmC'}
Let $X$ be a hyperelliptic variety of dimension $d$, defined over an algebraically closed field $\K$.
Assume that
 $\Char(\K)$ does not divide $|G|$. 
Let $L$ be an ample line bundle on $X$. Then,  $N + mL$ has the property $(N_k)$ as soon as $$m \geq \max \{2d+1, 2k+2\}\, ,$$ where   $N$ is any numerically trivial line bundle on $X$.
\end{theoremalpha}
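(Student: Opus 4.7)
The plan is to mimic the proof of Theorem \ref{thmY} on $X$, with $P := L^{\otimes 2}$, making two adjustments: replacing Kodaira vanishing by an analogue valid in our characteristic, and handling odd $m$ by letting the twist in Theorem \ref{thmY} be ample rather than numerically trivial. Since the action of $G$ is free with $|G|$ invertible in $\K$, the quotient map $\pi\colon A \to X$ is étale, $\pi^* K_X = K_A = \OO_A$ (so $K_X$ is torsion), and $H^i(X,-) \hookrightarrow H^i(A, \pi^*(-))$ for every $i$. As ample line bundles on an abelian variety have vanishing higher cohomology in any characteristic, we obtain, for every numerically trivial line bundle $N'$ on $X$ and every $j \geq 1$, the Kodaira-type vanishing
\[
H^i(X, N' + jL) = 0 \qquad (i \geq 1).
\]
Combined with the fact that the tensor product of two ample line bundles on $X$ is globally generated, this gives: $P = L^{\otimes 2}$ is ample and globally generated, and for any numerically trivial $N'$ both $N' + P = (N' + L) \otimes L$ and $(N' + L) + P = (N' + 2L) \otimes L$ are globally generated (as tensors of two amples).

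For even $m = 2s$, the proof of Theorem \ref{thmY} applies to $N + mL = N + sP$, with our Kodaira-type vanishing replacing the usual Kodaira vanishing. This yields $(N_k)$ for $s \geq \max\{d+1, k+1\}$, i.e., for every even $m \geq \max\{2d+2, 2k+2\}$.

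For odd $m = 2s + 1$, write $N + mL = (N+L) + sP$ and re-run the proof of Theorem \ref{thmY} with the \emph{ample} twist $N+L$ in place of the numerically trivial $N$. The condition $s \geq d+1$ in Theorem \ref{thmY} originates from requiring $H^i(X, \text{twist} + (s-i)P) = 0$ for $1 \leq i \leq d$; with the ample twist $N+L$, these cohomologies equal $H^i(X, N + (2s - 2i + 1)L)$, which vanish as soon as $s \geq i$, so certainly for $s \geq d$. The remaining ingredients in Theorem \ref{thmY}, in particular the bound $s \geq k+1$ attached to the order of syzygies $k$, are insensitive to whether the twist is numerically trivial or ample. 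Hence $(N_k)$ holds for every odd $m \geq \max\{2d+1, 2k+3\}$. Together, the even and odd cases cover all $m \geq \max\{2d+1, 2k+2\}$. The main obstacle lies here: one must trace through the proof of Theorem \ref{thmY} and verify that the bound $s \geq k+1$ governing the order of syzygies does not deteriorate when the twist is strengthened from numerically trivial to ample -- the only ingredient sensitive to the twist being the Kodaira-type vanishing, which only improves under such a strengthening.
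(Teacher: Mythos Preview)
Your strategy is exactly the paper's: set $P=2L$, note that $N+P$ is globally generated as a product of two amples, replace Kodaira's vanishing by the pull-back-to-$A$ argument, and rerun the proof of Theorem~\ref{thmY}. The paper does this in one stroke, you split by parity; both are fine, and your even case is correct as written.

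In the odd case there is a genuine (though easily repaired) gap. You assert that, with the ample twist $N+L$ in place of a numerically trivial one, ``the only ingredient sensitive to the twist [is] the Kodaira-type vanishing, which only improves''. This is true for the Castelnuovo--Mumford regularity checks with respect to $P$ (the parts (ii) of the two Lemmas in the proof of Theorem~\ref{thmY}), but it is \emph{false} for the checks with respect to $N+P$ (the parts (i), which the paper leaves implicit). With your twist one has $N+P=N'+3L$, and regularity of $P_s=N'+(2s+1)L$ with respect to $N+P$ asks for
\[
H^j\bigl((1-j)N' + (2s+1-3j)L\bigr)=0 \qquad (1\le j\le d),
\]
so one is subtracting $3j$, not $2j$, copies of $L$. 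For $j=d$ this needs $s\ge \lceil 3d/2\rceil$, not $s\ge d$; concretely, for $d=2$, $s=2$ (i.e.\ $m=5$) the $j=2$ term is $H^2(-N'-L)\cong H^0(K_X+N'+L)^{\vee}$, which is nonzero. The remedy is simply to reorder the multiplication chain so that the single $(N+P)$-factor is absorbed \emph{after} the $s-1$ copies of $P$ have been multiplied in. At that stage the sheaf being tested has $L$-coefficient $4s-1$, and the required inequality becomes $s\ge (3d+2)/4\le d$ (using $d\ge 2$). The same reordering handles the first map in the inductive Lemma, and with it your bound $s\ge \max\{d,k+1\}$ for odd $m$, hence the claimed $m\ge \max\{2d+1,2k+2\}$ overall, goes through.
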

\noindent When the acting group $G$  is \emph{commutative},  
 this bound can be  improved, in some cases, as follows:
\begin{theoremalpha}\label{thmC}
Let $X$ be a hyperelliptic variety of dimension $d$ as above.
Assume that $G$ is  a commutative group,  and  that  the Picard variety $(\Pic0 X)_{red}$ of $X$
 is non-trivial. 
Let $L$ be an ample line bundle on $X$. Then,  $N + mL$ with $m \geq  2d$ has the property $(N_1)$, where   $N \equiv 0$  on $X$.
Moreover, it has the property $(N_k)$ for all $k$ such that $2d > 2k+2$.
\end{theoremalpha}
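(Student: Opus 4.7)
The plan is to adapt the strategy of Theorem \ref{thmC'} by exploiting two additional features that appear only under the hypotheses of Theorem \ref{thmC}: the decomposition $\pi_*\OO_A = \bigoplus_{\chi \in \widehat{G}} N_\chi$ that becomes available when $G$ is commutative (here $\pi \colon A \to X$ is the quotient map and the $N_\chi$ are numerically trivial line bundles on $X$), together with the existence of a positive-dimensional family of numerically trivial line bundles on $X$ coming from $(\Pic0 X)_{red}$.

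First, I would reformulate property $(N_k)$ for $H = N + mL$ via a vanishing statement for the kernel bundle $M_H$: concretely, $(N_k)$ follows once $H^1(X, \wedge^{i+1} M_H \otimes H^{\otimes j} \otimes N') = 0$ for every $0 \le i \le k$, every $j \ge 1$, and (in a continuous-global-generation setup) generic $N' \in (\Pic0 X)_{red}$. Using the commutativity of $G$, the projection formula identifies $H^p(A, \pi^*E)$ with $\bigoplus_{\chi} H^p(X, E \otimes N_\chi)$ for any coherent sheaf $E$ on $X$, so any cohomological vanishing on $A$ for $\pi^*H$ descends simultaneously to $X$ for every twist $E \otimes N_\chi$.

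Next, on the abelian variety $A$ the line bundle $\pi^*H$ is numerically $m$ times the polarization $\pi^*L$, and the proof of Theorem \ref{thmY} (applied upstairs, where $K_A = 0$) delivers the needed Kodaira- and Castelnuovo-Mumford-type vanishings essentially one unit earlier than the argument on $X$ could directly produce them. The non-trivial part of $(\Pic0 X)_{red}$ is then used to replace one global generation input by a continuous-global-generation/M-regularity step in the spirit of Pareschi-Popa: averaging over the positive-dimensional family of numerically trivial line bundles recovers the vanishing that, in the absence of such a family, would require an extra unit of positivity of $L$. Combining these gains produces the bound $m \ge 2d$ for $(N_1)$, and more generally $2d > 2k+2$ for $(N_k)$, as stated.

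The main obstacle I expect is matching, uniformly in the $N_\chi$, the upstairs vanishing with the downstairs M-regularity step. The commutativity of $G$ enters precisely at this point: without it, the decomposition $\pi_*\OO_A = \bigoplus_\chi N_\chi$ is unavailable and one cannot descend cohomology twist by twist. A careful bookkeeping of which $N_\chi$ lie in the identity component of $\Pic0 X$ (where the M-regularity argument applies directly) versus the remaining components (which must be handled through the discrete decomposition) will be required to carry the argument through with the sharp constants.
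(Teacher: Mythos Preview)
Your broad strategy---use the splitting $\pi_*\OO_A=\bigoplus_\chi N_\chi$ afforded by commutativity of $G$, and exploit the positive-dimensional $(\Pic0 X)_{\mathrm{red}}$ to save one unit of positivity relative to Theorem~\ref{thmC'}---matches the paper's. But the proposal is imprecise at the two places that matter, and one of them hides exactly the trap the paper warns about.

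First, ``apply the proof of Theorem~\ref{thmY} upstairs on $A$'' is dangerous: the bundle you must control on $A$ is $\pi^*M_{L_m}$, which is \emph{not} the kernel bundle $M_{\pi^*L_m}$, and the inclusion $\pi^*M_{L_m}\subseteq M_{\pi^*L_m}$ does not split (this is precisely the gap in \cite{chiIy} flagged in the introduction). The paper never runs a kernel-bundle argument on $A$. Instead it proves that $\pi^*(M_{L_m}\otimes L_2)$ is $GV$ on $A$ once $m\ge 2d$, by applying Mumford's lemma \eqref{mumlemma} \emph{on $X$} with respect to $L_2\otimes\beta_0\otimes\beta_r$ and checking the regularity vanishings $H^j\bigl(X,L_m\otimes(L_2\otimes\beta_0\otimes\beta_r)^{-j}\bigr)=0$ for $1\le j\le d$. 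The role of $(\Pic0 X)_{\mathrm{red}}$ is not a CGG/averaging step: at $j=d$ and $m=2d$ the group in question is $H^d(X,\beta_0^{-d}\otimes\widetilde N)$ for some $\widetilde N\equiv 0$, and Serre duality together with positive-dimensionality of $(\Pic0 X)_{\mathrm{red}}$ allows the choice of a single generic $\beta_0$ making it vanish. From this $GV$-ness one gets $(N_0)$, and a second pass through the same Mumford-lemma mechanism (now with $E_{m,m'}=M_{L_m}\otimes L_{m'}\otimes\beta$ in place of $L_m$) gives the vanishing $H^j(X,M_{L_m}^{\otimes 2}\otimes L_{m'}\otimes\beta)=0$ for $m'\ge 2d$, hence $(N_1)$.

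Second, the ``moreover'' part is much simpler than your sketch suggests and needs no M-regularity input at all: once $\pi^*(M_{L_m}\otimes L_2)$ is $GV$, Proposition~\ref{presvan} gives directly that
\[
\pi^*M_{L_m}^{\otimes(k+1)}\otimes\pi^*L_m \;=\; \pi^*(M_{L_m}\otimes L_2)^{\otimes(k+1)}\otimes\pi^*\bigl(L_m\otimes L_2^{-(k+1)}\bigr)
\]
is $IT(0)$ whenever $m\ge 2d>2k+2$, which already yields $(N_k)$. Your proposed bookkeeping over which $N_\chi$ lie in the identity component is unnecessary: the $N_\chi$ are used only to pass between cohomology on $X$ and on $A$, not inside the $GV$/$IT(0)$ analysis. (A minor point: the paper uses tensor powers $M_{L_m}^{\otimes(i+1)}$ rather than $\wedge^{i+1}M_{L_m}$, which matters since the statement is also claimed in positive characteristic.)
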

\begin{introcorollary}
Mukai's conjecture holds true for a hyperelliptic variety $X$ as in Theorem \ref{thmC'} when $\dim X -1 \leq k \leq \dim X$, and 
for a hyperelliptic variety $X$ as in Theorem \ref{thmC} when $k = \dim X -2$.
\end{introcorollary}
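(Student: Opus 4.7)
The plan is to deduce this corollary directly from Theorems \ref{thmC'} and \ref{thmC} by taking, in each of the three regimes, the numerically trivial line bundle $N$ appearing in those statements to be the canonical bundle $K_X$ itself. Mukai's conjecture predicts property $(N_k)$ of $K_X + mL$ whenever $m \geq \dim X + 2 + k$, so the task reduces to comparing this bound with the numerical hypotheses of the two theorems.

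First I would verify that $K_X$ is admissible as the $N$ in those statements. Since $G$ acts freely on $A$ and, under the hypothesis $\Char(\K) \nmid |G|$, the quotient map $\pi \colon A \to X = A/G$ is \'etale, we have $\pi^* K_X \cong K_A \cong \OO_A$. Consequently $K_X$ lies in the kernel of $\pi^* \colon \Pic(X) \to \Pic(A)$, which is finite of order dividing $|G|$. Hence $K_X$ is torsion in $\Pic(X)$ and in particular $K_X \equiv 0$; this is the only genuinely geometric ingredient of the argument.

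Set $d = \dim X$. For $k = d$, Mukai asks for $m \geq 2d+2$, while Theorem \ref{thmC'} applied with $N = K_X$ requires $m \geq \max\{2d+1, 2d+2\} = 2d+2$, so the two bounds agree. For $k = d-1$, Mukai requires $m \geq 2d+1$, matching $\max\{2d+1, 2d\} = 2d+1$ in Theorem \ref{thmC'}. For $k = d-2$, under the additional hypotheses of Theorem \ref{thmC} that $G$ is commutative and $(\Pic0 X)_{red}$ is non-trivial, Mukai's bound reads $m \geq 2d$; since $2(d-2)+2 = 2d-2 < 2d$, Theorem \ref{thmC} indeed yields property $(N_{d-2})$ for every $m \geq 2d$. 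In each of the three cases the relevant theorem delivers $(N_k)$ for $K_X + mL$ precisely under Mukai's numerical condition.

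The corollary is therefore essentially bookkeeping once Theorems \ref{thmC'} and \ref{thmC} are in hand: all the non-trivial work has been packaged there, and the only conceptual step left is the identification of $K_X$ as a valid auxiliary torsion twist $N$. I do not foresee any real obstacle beyond checking that the three pairs of inequalities line up as above.
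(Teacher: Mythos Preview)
Your proposal is correct and is exactly the intended argument: the paper states this corollary immediately after Theorems \ref{thmC'} and \ref{thmC} without further proof, treating it as the straightforward numerical check you carry out, with $N = K_X$ being an admissible choice since $K_X$ is torsion on a hyperelliptic variety.
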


If $G$ be a finite \emph{cyclic} group acting freely and not only by translations on a complex abelian variety $A$ of dimension $d$,  the quotient $X$ is a so-called \emph{Bagnera-de Franchis variety} (\cite{bacafr, casurv2015}). 
For such a variety, one always has that the dimension of its Picard variety is $= h^1(X, \OO_X) > 0$,\footnote{See  \cite[\S 3]{lange}, and especially Lemma 3.3 of \emph{op.\! cit.}. For an arbitrary group $G$,  the dimension of the Picard variety of $X = A/G$ is not necessarily $\neq 0$ (see, e.g.,  \cite[Example 43]{bedenu} for a $3$-dimensional example  in  characteristic $0$, with $G$ commutative).} hence we get at once:
\begin{corollaryalpha}
Mukai's conjecture holds true for a complex Bagnera-de Franchis variety $X$, when  $\dim X -2 \leq k \leq \dim X$. 
\end{corollaryalpha}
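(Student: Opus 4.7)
The plan is to read off the three cases $k=d, d-1, d-2$ directly from Theorems \ref{thmC'} and \ref{thmC}, using only two basic observations about a complex Bagnera–de Franchis variety $X = A/G$: (i) since $G$ is cyclic it is commutative, so that Theorem \ref{thmC} applies whenever its hypothesis on the Picard variety is satisfied; and (ii) because $K_A = \OO_A$ and $X$ is an \'etale quotient, the canonical bundle $K_X$ is torsion, hence numerically trivial, so that $K_X + mL$ is of the form $N + mL$ with $N \equiv 0$, which is exactly the shape of line bundle handled by Theorems \ref{thmC'} and \ref{thmC}.

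For the cases $k = \dim X$ and $k = \dim X - 1$, I would simply invoke Theorem \ref{thmC'}. Setting $d = \dim X$, Mukai's conjecture requires $m \geq d + 2 + k$, i.e.\ $m \geq 2d+2$ and $m \geq 2d+1$ respectively, while Theorem \ref{thmC'} yields property $(N_k)$ for $m \geq \max\{2d+1, 2k+2\}$, which evaluates to $2d+2$ and $2d+1$ in these two cases. Thus Mukai's bound is enough to trigger Theorem \ref{thmC'}.

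For the remaining case $k = \dim X - 2$, Theorem \ref{thmC'} alone would force $m \geq 2d+1$, which is one too large, so here I would appeal to Theorem \ref{thmC}. Its commutativity hypothesis holds because $G$ is cyclic, and the non-triviality of $(\Pic^0 X)_{red}$ is exactly the content of the footnote citing \cite[\S 3, Lemma 3.3]{lange}, which gives $\dim (\Pic^0 X)_{red} = h^1(X, \OO_X) > 0$ for Bagnera–de Franchis varieties. Theorem \ref{thmC} then guarantees $(N_k)$ for $m \geq 2d$ as soon as $2d > 2k+2$, i.e.\ $k \leq d-2$; specialising to $k = d-2$ and to Mukai's bound $m \geq d+2+k = 2d$, both inequalities are met, giving $(N_{d-2})$.

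There is no real obstacle: the corollary is a bookkeeping consequence of Theorems \ref{thmC'} and \ref{thmC}, together with the classical input that the Picard variety of a Bagnera–de Franchis variety is positive-dimensional. The only point that deserves a line of care is checking, in each of the three ranges of $k$, that the numerical assumption of the relevant theorem is implied by Mukai's bound $m \geq d+2+k$; this is the short arithmetic carried out above.
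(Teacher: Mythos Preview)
Your proposal is correct and matches the paper's own reasoning essentially line for line: the paper deduces the corollary ``at once'' from the preceding introcorollary (which records exactly the arithmetic you carry out from Theorems~\ref{thmC'} and~\ref{thmC}) together with Lange's input that $\dim(\Pic^0 X)_{red} = h^1(X,\OO_X) > 0$ for a Bagnera--de Franchis variety. Your additional remark that $K_X$ is torsion (hence numerically trivial, so $K_X + mL$ has the shape $N + mL$ required by the theorems) makes explicit a point the paper leaves implicit.
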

\noindent A complex bielliptic surface is a Bagnera-de Franchis variety of dimension $2$.\footnote{The canonical cover of a bielliptic surface $S$ is an abelian surface $A$, with a free action of $\Z_n$ on it such that $S \simeq A/\Z_n$. Here, $n$ is the order of $\omega_S = \OO_S(K_S)$.} Note that, 
  when $k \geq 2$, 
\[
\max \{4 + 1, 2k+2\} = 2k+2 = \max\{4, 2k+2\}\, .
\]
In this sense,
Theorems  \ref{thmC} and \ref{thmC'}
 generalize Gallego-Purnaprajna result to higher dimensions, and to positive characteristic.

Concerning the organization of the material: 
 in  \S \ref{S1} and \S \ref{S2} we recall some notations and useful results. 
The proof of Theorem \ref{thmM} is the content of \S \ref{S3}. It is inspired by \cite{gapu}, and it works by combining  Castelnuovo-Mumford regularity (more specifically,  Mumford's Lemma  \eqref{mumlemma}), along with more recent results of Pareschi-Popa \cite{ppI, ppIII} and Chintapalli-Iyer \cite{chiIy} on vanishing properties and global generation criteria for sheaves 
 on abelian varieties.  
The proof of Theorem \ref{thmC} 
is quite similar, although certain difficulties arise.  
On the other hand, the proof of Theorem \ref{thmY} (and of Theorem \ref{thmC'}) is considerably simpler from the technical side. 
All of them will be  given, in alphabetical order, in \S \ref{S4}.

\vskip0.3truecm\noindent\textbf{Acknowledgment.} 
 The author  whish to thank Beppe Pareschi and Sofia Tirabassi for  conversations around these topics. 

\section{Syzygies of projective varieties: the linearity property $(N_k)$}\label{S1}
In this section, the characteristic of the base field $\K$ is allowed to be arbitrary.
 Let $Y$ be a projective variety over $\K$,  and $L$ be an ample line bundle on $Y$.
The \emph{section algebra} 
$$R(L) := \bigoplus_{n \geq 0} H^0(Y, nL)$$
of $L$ is   a finitely generated  module over the polynomial ring $S_L := \mathrm{Sym} (H^0(Y, L))$, and,  hence, it admits a  \emph{minimal graded free} resolution
\[
0 \rightarrow E_d(L) \rightarrow \ldots \rightarrow E_1(L) \rightarrow E_0(L) \rightarrow R(L) \rightarrow 0,
\]
which is unique up to isomorphism.
\begin{definition}[\cite{grla}]
Given an integer $k \geq 0$, 
$L$ is said to \emph{satisfy the property} $(N_k)$ if the first $k$ steps of the minimal graded free resolution of the $S_L$-algebra $R(L)$ are linear, i.e., of the following form:
\[
E_0(L) = S_L \quad  \mathrm{and} \quad  E_i(L) = \bigoplus S_L(- (i + 1)) \quad \mathrm{for\,\, all\,\,} 1 \leq i \leq k.
\]
\end{definition}
\noindent We also refer the reader to \cite[Chapter 1.8.D]{laI}, 
or to the upcoming book of Ein and
Lazarsfeld on syzygies \cite{elbook}. 

\subsection{The geometric interpretation}\label{geointer} 
The property $(N_0)$ means that $L$ is projectively normal, that is, the multiplication maps
\begin{equation}\label{multr}
H^0(Y, L) \otimes H^0(Y, L^h) \to H^0(Y, L^h)
\end{equation}
are surjective for all $h \geq 1$.
 Note that if $L$ is  projectively normal  then $L$  is very ample (see \cite[pp.\! 38-39]{mum2}\footnote{A projectively normal line bundle is called \emph{normally generated} in \cite{mum2}, from which the notation $(N_0)$.}). In this case, $$\Ker[E_0(L) = S_L \twoheadrightarrow R(L)] = I_{Y/\mathbb{P}}$$ is  
the homogeneous ideal of $Y \hookrightarrow \mathbb P := \mathbb{P}(H^0(Y, L)^{\vee})$.

If $L$ satisfies $(N_1)$, then
$$\ldots \to \bigoplus S_L(-2)  \to I_{Y/\mathbb{P}} \to 0$$ is
 a resolution of $I_{Y/\mathbb{P}}$. So, the property $(N_1)$ for $L$ means that 
$L$ is projectively normal and the homogeneous ideal $I_{Y/\mathbb P}$ is generated by a minimal set of \emph{quadrics} $\{ q_j\}_j$. 

The property $(N_2)$ asks for the resolution to be
$$\ldots \to \bigoplus S_L(-3) \to \bigoplus S_L(-2)  \to I_{Y/\mathbb{P}} \to 0.$$ 
 This says  that the relations (or syzygies) among the quadrics $q_j$ are generated by \emph{linear} ones, that is, by those of the forms 
\[
\sum_j l_j \cdot q_j = 0, 
\]
with $l_j$ of degree $1$.
 More in general, for any $k \geq 2$, we are asking that
the first $(k-1)$ modules of syzygies among these quadrics are linear, i.e., as simple as possible.

\subsection{A  cohomological criterion}\label{cohomcriterion} 
Let $M_L$ be the \emph{kernel bundle} of an ample and \emph{globally generated} line bundle $L$ on $Y$, that is the kernel of the evaluation morphism of global sections of $L$: $$0 \to M_L \to H^0(Y, L) \otimes \OO_Y \xrightarrow{\rm{ev}} L \to 0. $$
\begin{proposition}\label{cohomcri} 
Given $k \geq 0$, 
if the vanishing
\begin{equation*}\label{cohomcrivan}
H^1(Y, M_L^{\otimes (i+1)} \otimes L^h) = 0
\end{equation*}
holds true for all integers $0 \leq i \leq k$ and
$h \geq 1$, then $L$ satisfies the property $(N_k)$.
\end{proposition}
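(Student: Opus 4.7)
The plan is to verify $(N_k)$ by deducing the vanishing of suitable Koszul cohomology groups from the hypothesis, via iterated use of the evaluation sequence
\[
0 \to M_L \to V \otimes \OO_Y \to L \to 0, \qquad V := H^0(Y, L).
\]
Twisting this sequence by $L^h$ ($h \geq 1$) and taking global sections, the $i = 0$ case of the hypothesis yields surjectivity of the multiplication map $V \otimes H^0(Y, L^h) \twoheadrightarrow H^0(Y, L^{h+1})$, which is already the property $(N_0)$ discussed in \S\ref{geointer}.

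Next, for each $1 \leq i \leq k$, I would tensor the evaluation sequence with the locally free sheaf $M_L^{\otimes i} \otimes L^h$ to obtain
\[
0 \to M_L^{\otimes (i+1)} \otimes L^h \to V \otimes M_L^{\otimes i} \otimes L^h \to M_L^{\otimes i} \otimes L^{h+1} \to 0,
\]
and extract from the long exact sequence in cohomology, using the hypothesis, the surjectivity
\[
V \otimes H^0(Y, M_L^{\otimes i} \otimes L^h) \twoheadrightarrow H^0(Y, M_L^{\otimes i} \otimes L^{h+1}), \qquad 0 \leq i \leq k,\; h \geq 1. \qquad (\ast)
\]
Iterating the same long exact sequence also realizes $H^0(Y, M_L^{\otimes p} \otimes L^q)$ as an iterated kernel of multiplication maps starting from $V^{\otimes p} \otimes H^0(L^q)$.

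The last step is to translate $(\ast)$ together with this iterated-kernel description into the vanishing of Koszul cohomology $K_{p,q}(Y, L) = 0$ for $0 \leq p \leq k$ and $q \geq 2$, which, combined with $(N_0)$, is equivalent to $(N_k)$. The relevant piece of the Koszul complex computing $K_{p,q}$ is exact precisely when the iterated kernel above is hit surjectively by the $V$-multiplication from $H^0(Y, M_L^{\otimes (p-1)} \otimes L^q)$, which is exactly the content of $(\ast)$. The main subtlety---and the hardest part to write down cleanly---is the passage from the tensor powers appearing in the hypothesis to the wedge powers $\wedge^{p+1} M_L$ that naturally appear in the Koszul complex. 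In characteristic zero, antisymmetrization exhibits $\wedge^{p+1} M_L$ as a direct summand of $M_L^{\otimes (p+1)}$, so the vanishings transfer for free. In positive characteristic this splitting can fail, and one must instead chase the natural surjection $M_L^{\otimes (p+1)} \twoheadrightarrow \wedge^{p+1} M_L$ (together with its analogue for $V$) and verify by hand that $(\ast)$ suffices to force exactness of the relevant portion of the wedge-Koszul complex.
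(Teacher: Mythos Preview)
Your characteristic-zero outline is the standard one and matches what the paper does (it simply defers to \cite{lasampl} and \cite{pa1}). The gap is in positive characteristic. Chasing the surjection $M_L^{\otimes (p+1)} \twoheadrightarrow \wedge^{p+1} M_L$ does not do what you need: a surjection of sheaves gives no control over $H^1$ of the quotient from $H^1$ of the source, and the kernel here is built from symmetric-power pieces whose cohomology the hypothesis says nothing about. Worse, the very identification $K_{p,q}(Y,L) \simeq H^1(Y, \wedge^{p+1} M_L \otimes L^{q-1})$ that underlies the wedge approach already relies on the splitting of $\wedge^{p} V \hookrightarrow V^{\otimes p}$, so in small characteristic you cannot even set up the target of your ``verify by hand'' step.

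What actually works---and what the paper invokes---is an algebraic result of Kempf \cite{ke}, spelled out in this setting in \cite[\S 4]{ca}. The point is to bypass wedge powers entirely: Kempf shows that the graded pieces $\Tor^{S_L}_p(R(L),\K)$ governing $(N_k)$ can be computed by a \emph{tensor}-power bar-type complex, so that linearity of the $p$-th step is equivalent precisely to the surjectivity statements $(\ast)$ you already extracted. In other words, $(\ast)$ \emph{is} the characterization of $(N_k)$ in arbitrary characteristic, with no detour through $\wedge$. Your proposal correctly flags the difficulty but does not supply this mechanism; you should replace the final paragraph with a reference to Kempf's criterion rather than an attempt to descend to the wedge complex.
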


\noindent This fact is well-known when $\Char(\K) = 0$ (see, e.g., \cite[pp.\! 510-511]{lasampl} or \cite[Proof of Theorem 4.3]{pa1}).  
It holds true as well in arbitrary characteristic thanks to
 an algebraic result of Kempf \cite{ke}, as proved by the author in \cite[\S 4]{ca} (see also \cite[footnote 2 at p.\! 1361]{ca2}).

\section{Propaedeutic results on abelian varieties}\label{S2}
As before, let $\K$ be an algebraically closed field of arbitrary characteristic. Let $A$ be an abelian variety, defined over $\K$.

\begin{definition}\label{def3.1}
A coherent sheaf $\FF$ on $A$ is said to be:
\begin{itemize}
\item[a)] $IT(0)$ (or to satisfy the \emph{Index Theorem with index} $0$), if $$H^j(A, \FF \otimes \alpha) = 0$$
for all $j > 0$ and  all closed points $\alpha \in \Pic0 A$;  \\

\item[b)]
 \emph{GV} (or a \emph{generic vanishing sheaf}), if
\begin{equation*}\label{defgv}
\mathrm{codim}_{\Pic0 A} \{\alpha \in \Pic0 A \ |\ h^j(A, \FF \otimes \alpha) \neq 0  \} \geq j
\end{equation*}
for all $j > 0$.
\end{itemize}
\end{definition}
 
\begin{remark}\label{rmk3.1}
 Of course, by definition, an $IT(0)$ sheaf is $GV$, and being $IT(0)$/$GV$ is invariant under the tensor product by a fixed element in $\Pic0 A$.  
Note that the $GV$ condition implies, in particular, that $h^j(A, \FF \otimes \alpha) = 0$ for $j > 0$ and a general\footnote{Indeed, the cohomological loci in Definition \ref{def3.1}(b) are Zariski closed, by upper semicontinuity.} $\alpha \in \Pic0 A$. Moreover, as soon as $h^j(A, \FF \otimes \alpha) = 0$ for all $j \geq 2$ and for all $\alpha \in \Pic0 A$ (and this often happens in our cases), in order to check the $GV$-ness of $\FF$ it suffices to find an $\alpha_0 \in \Pic0 A$ such that  $h^1(A, \FF \otimes \alpha_0) = 0$.   
\end{remark}

\begin{example}\label{ex3.1}
Basic (and important) examples of $IT(0)$ (resp.\! $GV$) sheaves are ample (resp.\! nef) \emph{line bundles} on $A$. Indeed, all the higher cohomology groups of an ample line bundle on an abelian variety vanish by Mumford's vanishing theorem \cite[\S 16]{mum3}. For the nefness, see \cite[Theorem 5.2]{jipa} and \cite[Example 2.1]{ito0}. 
\end{example}

An $IT(0)$  sheaf on an abelian variety is ample \cite{debarre}, and a $GV$ one is nef \cite{ppIII}.
 In fact, $IT(0)$/$GV$ sheaves satisfy certain  properties which are \emph{formally} analogous to those valid for ample/nef sheaves.   
We collect below a particularly useful one, for later use:
\begin{proposition}[\cite{ppIII}, Proposition 3.1 and Theorem 3.2]\label{presvan}
Let $\FF$ and $\GG$ be coherent sheaves on $A$, one of them locally free. If $\FF$ is $IT(0)$ and $\GG$ is  $GV$, then $\FF \otimes \GG$ is $IT(0)$. If $\FF$ and $\GG$ are both $GV$, then $\FF \otimes \GG$ is $GV$.
\end{proposition}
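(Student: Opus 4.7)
The plan is to use Mukai's Fourier--Mukai transform $R\hat S\colon D^b(A)\to D^b(\hat A)$ (with kernel the Poincar\'e bundle), together with the cohomological reformulations of the $IT(0)$ and $GV$ conditions developed by Mukai, Hacon, and Pareschi--Popa in earlier work. Recall that $\FF$ is $IT(0)$ iff $R\hat S\FF$ reduces to a locally free sheaf $\hat\FF:=R^0\hat S\FF$ concentrated in degree $0$, and that $\GG$ is $GV$ iff the ``dual'' Fourier--Mukai object $R\HHom(R\hat S\GG,\OO_{\hat A})[g]$ (with $g=\dim A$) is a sheaf concentrated in degree $0$---a WIT-type condition on $\hat A$ that is, in a precise sense, dual to $IT(0)$ on $A$.

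For the first assertion, one observes that $(\FF\otimes\GG)\otimes\alpha\simeq(\FF\otimes\alpha)\otimes\GG$ and that both the $IT(0)$ and $GV$ properties are stable under tensoring by elements of $\Pic0 A$ (Remark \ref{rmk3.1}); hence checking $IT(0)$ of $\FF\otimes\GG$ reduces to proving $H^i(A,\FF\otimes\GG)=0$ for all $i>0$ under the standing hypotheses. By derived base change through the diagram $A\xleftarrow{p_A}A\times\hat A\xrightarrow{p_{\hat A}}\hat A$ and the projection formula (which uses the local freeness of one of the two factors, so that the derived and ordinary tensor products coincide), this cohomology group identifies with an $\Ext$-group on $\hat A$ between $R\hat S\FF$, a locally free sheaf, and a shift of the WIT-dual of $R\hat S\GG$. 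The codimension bounds on the higher cohomology sheaves of the latter, combined with local freeness of $R\hat S\FF$, then make the corresponding hyperext spectral sequence vanish in positive degrees.

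For the second assertion, the same setup applies but one propagates the codimension estimates through the spectral sequence instead of obtaining a full vanishing: the resulting bound on the codimension of the support of $R^i\hat S(\FF\otimes\GG)$ is exactly the $GV$ condition for $\FF\otimes\GG$. Again the local-freeness hypothesis on one of the factors is precisely what makes the base change and projection formula steps legitimate.

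The main technical obstacle lies in establishing the Fourier--Mukai duality between $GV$ on $A$ and the corresponding WIT-type property on $\hat A$, and in tracking the codimensions of support loci of cohomology sheaves through the relevant spectral sequence. Once this machinery (which is the central contribution of \cite{ppIII}) is in place, both statements follow by a uniform argument, and the second becomes essentially a codimension-counting refinement of the first.
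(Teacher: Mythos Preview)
The paper does not prove this proposition itself; it is quoted from \cite{ppIII} (Proposition~3.1 and Theorem~3.2), so there is no in-paper argument to compare against. Your outline is precisely the Pareschi--Popa strategy: characterize $IT(0)$ and $GV$ via the Fourier--Mukai transform (the latter through the Hacon-type criterion that $R\hat S(D\GG)$ be a sheaf concentrated in degree $g$), use that $R\hat S$ is a derived equivalence to identify $H^i(A,\FF\otimes\GG)=\Ext^i_A(\GG^\vee,\FF)$ with $\Ext^{g+i}_{\hat A}(R^g\hat S(\GG^\vee),\widehat\FF)$ when $\GG$ is locally free, and conclude from $\dim\hat A=g$ for the $IT(0)$ assertion, or by propagating support-codimension bounds through the hyper-$\Ext$ spectral sequence for the $GV$ assertion. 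The sketch is correct in outline; the only place it is thin is the second assertion, where the actual codimension bookkeeping in the spectral sequence is the substantive content of \cite{ppIII} and is not spelled out here.
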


\section{Proof of Theorem \ref{thmM}}\label{S3}
Let $S$ be a bielliptic surface defined over an algebraically closed field $\K$, with $\Char (\K) \neq 2, 3$.
Let 
\[
\pi \colon A \to S
\] 
be the canonical cover of $S$, that is the finite \'etale cover defined by the canonical line bundle $\omega_S = \OO_S(K_S)$ of $S$, which is torsion of order $n = 2, 3, 4,$ or $6$ (see \cite[\S 3]{bomu}, or \cite[\S 10]{badescu}). 
 Note that, by our assumption, $\Char(\K)$ does not divide $n = \deg\, \pi$.
 One has that $A$ is an abelian surface (see \cite[\S 1.4, p.\! 23]{boada}), and 
\begin{equation}\label{split00}
\pi_*\OO_A \simeq \bigoplus_{r=0}^{n-1} \omega_S^{r}\, . 
\end{equation}

Take  an ample line bundle $L$  on $S$.  
Let us denote
\[
L_m :=  \OO_S(K_S + mL). 
\]
\begin{remark}\label{noK}
Note that $\pi^*L_m  \simeq \pi^*\OO_S(mL)$, as $\pi$ is \'etale. The  arguments below also apply to $\OO_S(mL)$ instead of $L_m$, with basically no modifications.
\end{remark}
\begin{remark}\label{reider}
By Reider's theorem (which holds true as well for bielliptic surfaces of positive characteristic $\neq 2, 3$ by \cite[Corollary 8]{sb}), the tensor product of two ample line bundles on $S$ is globally generated (see, e.g.,  \cite[Lemma 2.7]{gapu}). 
\end{remark}
 We assume, from now on, $m \geq 2$. So $L_m$ is globally generated, and  
 we may consider the kernel bundle $M_{L_m}$ associated to it:
\begin{equation}\label{MLM}
0 \to M_{L_m} \to H^0(S, L_m) \otimes \OO_S \to L_m \to 0\, .
\end{equation}
Thanks to Proposition \ref{cohomcri}, given $k \geq 0$,  the property $(N_k)$ for $L_m$ follows if the vanishing 
\begin{equation*}
H^1(S, M_{L_m}^{\otimes (i+1)} \otimes L_m^h) = 0
\end{equation*}
holds true for all integers $0 \leq i \leq k$ and $h \geq 1$. 
 Since, by \eqref{split00}, $M_{L_m}^{\otimes (i+1)} \otimes L_m^h$ is a direct summand of
$$\pi_*\OO_A \otimes M_{L_m}^{\otimes (i+1)} \otimes L_m^h \simeq \pi_* (\pi^*M_{L_m}^{\otimes (i+1)} \otimes \pi^*L_m^h)\, ,$$ and since the morphism $\pi$ is finite, it is more than enough  to prove that $\pi^*M_{L_m}^{\otimes (i+1)} \otimes \pi^*L_m^h$ is an $IT(0)$ sheaf on $A$, or simply that $H^1(A, \pi^*M_{L_m}^{\otimes (i+1)} \otimes \pi^*L_m^h)$. Indeed, if this is the case, one would have
\[
h^1(S, M_{L_m}^{\otimes (i+1)} \otimes L_m^h) \leq h^1(S, \pi_* (\pi^*M_{L_m}^{\otimes (i+1)} \otimes \pi^*L_m^h)) = h^1(A, \pi^*M_{L_m}^{\otimes (i+1)} \otimes \pi^*L_m^h) = 0\, .
\] 
Further, by Proposition \ref{presvan} and Example \ref{ex3.1},  we may typically focus only on the $h = 1$ case.

\subsection{Projective normality}\label{S3.1}
Take $k = 0$. We prove, more generally,  Proposition \ref{lemmap0} below, which will be also useful in the next section. 
 Its proof uses Castelnuovo-Mumford regularity, likewise \cite{gapu}.

 Let us recall that a coherent sheaf $\FF$ on a projective variety $Y$ is said to be \emph{regular} with respect to an ample and globally generated  line bundle $P$ on $Y$,
if
\[
H^j(Y, \FF \otimes P^{-j}) = 0
\] 
for all $j > 0$. Generalizing a lemma of Castelnuovo, Mumford proved that
 the multiplication map
\begin{equation}\label{mumlemma}
H^0(Y, \FF) \otimes H^0(Y, P) \to H^0(Y, \FF \otimes P)
\end{equation}
is surjective, if $\FF$ is a regular sheaf on $Y$ with respect to $P$ (\cite{mum2},  or,  e.g., \cite[Theorem 1.8.5]{laI}).

\begin{proposition}\label{lemmap0} The sheaf $\pi^*M_{L_m} \otimes \pi^*L_2$
is $GV$ as soon as  $m \geq  4$.
Hence, $\pi^*M_{L_m} \otimes \pi^*L_n$
is
 $IT(0)$ if 
 $m \geq 4$ and $n \geq 3$.
\end{proposition}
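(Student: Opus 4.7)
The plan is to reduce the $GV$-ness of $\pi^*M_{L_m} \otimes \pi^*L_2$ to a single multiplication-map surjectivity, and then to establish that surjectivity by descending to $S$ and applying Mumford's lemma. Starting from the pullback of \eqref{MLM} and twisting by $\pi^*L_2 \otimes \alpha$ for $\alpha \in \Pic0 A$, one gets
\[
0 \to \pi^*M_{L_m} \otimes \pi^*L_2 \otimes \alpha \to H^0(S, L_m) \otimes \pi^*L_2 \otimes \alpha \to \pi^*L_m \otimes \pi^*L_2 \otimes \alpha \to 0.
\]
Because $\pi^*\omega_S \simeq \OO_A$, both outer twists are ample line bundles on $A$, hence $IT(0)$ by Example \ref{ex3.1}. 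The long exact sequence then annihilates $H^j(A, \pi^*M_{L_m} \otimes \pi^*L_2 \otimes \alpha)$ for $j \geq 2$ and every $\alpha$, so by Remark \ref{rmk3.1} the $GV$-ness amounts to producing one $\alpha_0 \in \Pic0 A$ for which the multiplication map
\[
\mu_{\alpha_0}: H^0(S, L_m) \otimes H^0(A, \pi^*L_2 \otimes \alpha_0) \to H^0(A, \pi^*L_m \otimes \pi^*L_2 \otimes \alpha_0)
\]
is surjective.

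To build $\alpha_0$, I would take $\alpha_0 = \pi^*\beta_0$ for a sufficiently general $\beta_0 \in (\Pic0 S)_{red}$. Combining the projection formula with the splitting \eqref{split00}, the source and target of $\mu_{\alpha_0}$ both split as direct sums indexed by $r = 0, \ldots, n-1$, and $\mu_{\alpha_0}$ itself becomes the direct sum of the $S$-level multiplication maps
\[
H^0(S, L_m) \otimes H^0(S, Q_r) \to H^0(S, L_m \otimes Q_r), \qquad Q_r := L_2 \otimes \beta_0 \otimes \omega_S^r.
\]
By Mumford's lemma \eqref{mumlemma}, it suffices to check that each $Q_r$ is ample and globally generated and that $L_m$ is $Q_r$-regular on $S$. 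Global generation follows from the positive-characteristic Reider theorem (Remark \ref{reider}) after writing $Q_r$ as a tensor of two line bundles numerically equivalent to $L$. The $Q_r$-regularity splits into the two vanishings $H^i(S, L_m \otimes Q_r^{-i}) = 0$ for $i = 1, 2$: for $i = 1$ the twist is $K_S$ plus a numerically ample class as soon as $m \geq 4$, so Kodaira vanishing applies; for $i = 2$, Serre duality reduces the statement to a vanishing of $h^0$ of a line bundle which is anti-ample for $m \geq 5$, while in the delicate boundary case $m = 4$ it becomes $h^0(S, (2r+2)K_S + 2\beta_0) = 0$, and this holds provided $\beta_0$ is chosen outside the finite set of torsion translates where this class is trivial.

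The $IT(0)$ assertion for $\pi^*M_{L_m} \otimes \pi^*L_n$ with $n \geq 3$ is then a formal consequence. Since $\pi^*\omega_S \simeq \OO_A$, one has $\pi^*L_n \simeq \pi^*L_2 \otimes \pi^*((n-2)L)$, and the extra factor $\pi^*((n-2)L)$ is ample on $A$, hence $IT(0)$ by Example \ref{ex3.1}. Proposition \ref{presvan} (tensor of $GV$ with $IT(0)$ is $IT(0)$) then closes the argument.

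The main technical obstacle I foresee is the boundary regularity check in the case $i = 2$, $m = 4$: there $L_m \otimes Q_r^{-2}$ is numerically trivial on $S$, so Kodaira vanishing gives nothing directly, and even pulling back to $A$ is of no help since Mumford's vanishing requires strict ampleness. The whole argument therefore hinges on having genuine freedom to deform $\beta_0$ in $(\Pic0 S)_{red}$ so as to keep the relevant numerically trivial line bundle non-trivial. A minor related subtlety, but not a real obstacle, is that $\pi^*: (\Pic0 S)_{red} \to \Pic0 A$ has image of dimension strictly smaller than $\dim \Pic0 A$; however, Remark \ref{rmk3.1} only demands one suitable $\alpha_0$, so restricting to $\alpha_0 \in \mathrm{Im}(\pi^*)$ is perfectly sufficient.
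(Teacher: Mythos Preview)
Your approach is essentially the paper's: reduce $GV$-ness to a single $H^1$-vanishing at $\alpha_0 = \pi^*\beta_0$, split via \eqref{split00} into the multiplication maps $H^0(L_m)\otimes H^0(Q_r)\to H^0(L_m\otimes Q_r)$ on $S$, verify $Q_r$-regularity of $L_m$ via Mumford's lemma, and handle the boundary case $m=4$, $i=2$ by Serre duality plus a genericity choice of $\beta_0$ in the positive-dimensional $\Pic^0 S$.

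The one point where you diverge, and where there is a genuine (though easily repaired) gap, is your appeal to Kodaira vanishing for $H^1(S, L_m\otimes Q_r^{-1})=0$. The proposition is stated over an algebraically closed field of characteristic $\neq 2,3$, and Kodaira vanishing is not available in positive characteristic. The paper explicitly flags this and replaces the step by the characteristic-free argument
\[
H^1(S, L_m\otimes Q_r^{-1}) \ \subseteq\ H^1\bigl(S,\pi_*\pi^*(L_m\otimes Q_r^{-1})\bigr)\ =\ H^1\bigl(A,\pi^*(L_m\otimes Q_r^{-1})\bigr)\ =\ 0,
\]
the last equality being Mumford's vanishing for the ample line bundle $\pi^*(L_m\otimes Q_r^{-1})$ on the abelian surface $A$ (Example \ref{ex3.1}). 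Your Serre-duality treatment of the $i=2$ case is fine in all characteristics, so with this single substitution your proof coincides with the paper's.
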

\begin{proof}
Thanks to
 Proposition \ref{presvan} and Example \ref{ex3.1}, it suffices to prove the first statement.  

To show that $\pi^*M_{L_m} \otimes \pi^*L_2$
is $GV$, we need  to find a point $\alpha_0 \in \Pic0 A$ such that $h^1(A, \pi^* M_{L_m} \otimes \pi^*L_2 \otimes \alpha_0) = 0$.
 Let us explain why: 
by taking the  short exact sequence defining $M_{L_m}$ and pulling it back via $\pi$, one gets
\[
0 \to \pi^*M_{L_m} \to H^0(S, L_m) \otimes \OO_A \to \pi^*L_m \to 0\, .
\] 
Tensoring it with $\pi^*L_2$ and taking the long exact sequence in cohomology, one has, since $\pi^*L_2$ and $\pi^*L_m$ are ample, that  $h^i(A, \pi^* M_{L_m} \otimes \pi^*L_2 \otimes \alpha) = 0$ for all $i \geq 2$ and all $\alpha \in \Pic0 A$. Therefore, as observed in Remark \ref{rmk3.1}, to get the $GV$ condition one only needs  to 
 check that $$\{ \alpha \in \Pic0 A \ |\ h^1(\pi^* M_{L_m} \otimes \pi^*L_2 \otimes \alpha) \neq 0\}$$ is \emph{properly} contained in $\Pic0 A$.

To do so, we proceed as follows. 
Since $\pi$ is finite,  
\begin{equation}\label{split1}
h^1(A, \pi^* M_{L_m} \otimes \pi^*L_2 \otimes \pi^*\beta) = h^1(S, \pi_*\pi^* (M_{L_m} \otimes L_2 \otimes \beta))
\end{equation}
 for any $\beta \in \Pic0 S$, and,  by the projection formula, 
\begin{equation}\label{split0}
\pi_* \pi^*(M_{L_m} \otimes L_2 \otimes \beta) \simeq (M_{L_m} \otimes L_2 \otimes \beta) \otimes \pi_*\OO_A \simeq \bigoplus_{r=0}^{n-1} M_{L_m} \otimes L_2 \otimes \beta \otimes \omega_S^{r}\, .
\end{equation}
Therefore, to get the vanishing $h^1(A, \pi^* M_{L_m} \otimes \pi^*L_2 \otimes \pi^*\beta_0) = 0$ for a certain $\beta_0 \in \Pic0 S$, it suffices to choose $\beta_0 \in \Pic0 S$ in  such a way  that
\begin{equation}\label{mum00}
H^1(S, L_m \otimes (L_2 \otimes \beta_0 \otimes \omega_S^{r})^{-1}) = 0 \quad \mathrm{and} \quad H^2(S, L_m \otimes (L_2 \otimes \beta_0 \otimes \omega_S^{r})^{-2}) = 0
\end{equation}
for all $r = 0, \ldots, n-1$, 
and then apply \eqref{mumlemma}.\footnote{Note that $L_2 \otimes \beta_0 \otimes \omega_S^{r}$ is globally generated for all $r$, thanks to Remark \ref{reider}.} This would give that the multiplication map
\[
H^0(S, L_m) \otimes H^0(S, L_2 \otimes \beta_0 \otimes \omega_S^{r}) \to H^0(S, L_{m} \otimes L_2 \otimes \beta_0 \otimes \omega_S^{1+ r})
\]
is surjective for all $r = 0, \ldots, n-1$, and it follows from the long exact sequence in cohomology associated to the short exact sequence defining $M_{L_m}$, twisted by $L_2 \otimes \beta_0 \otimes \omega_S^{r}$, and thanks to the vanishing $h^1(S, L_2 \otimes \beta_0 \otimes \omega_S^{r}) = 0$,\footnote{We point out that here we are \emph{not} appealing to Kodaira's vanishing. It suffices to note that 
\begin{equation}\label{rmk3.11}
H^1(S, L_2 \otimes \beta_0 \otimes \omega_S^{r}) \subseteq H^1(S, \pi_* \pi^*(L_2 \otimes \beta_0 \otimes \omega_S^{r})) = H^1(A, \pi^*(L_2 \otimes \beta_0 \otimes \omega_S^{r})) 
\end{equation}
by \eqref{split00}, and to apply Mumford's vanishing on abelian varieties (see Example \ref{ex3.1}).
} that $h^1(S, M_{L_m} \otimes  L_2 \otimes \beta_0 \otimes \omega_S^{r}) = 0$ for all $r = 0, \ldots, n-1$. Hence, by \eqref{split0} and \eqref{split1}, we finally get $h^1(A, \pi^* M_{L_m} \otimes \pi^*L_2 \otimes \pi^*\beta_0) = 0$. 

Now, the left-hand vanishing in \eqref{mum00} holds true for any $\beta_0$ and $r$, because
 $L_m \otimes (L_2 \otimes \beta_0 \otimes \omega_S^{r})^{-1}$ is ample on $S$ (and we can argue as in \eqref{rmk3.11}). The right-hand vanishing in \eqref{mum00}  holds true as well if $m \geq 5$,  by the same reason. 
So we reduced to find a $\beta_0 \in \Pic0 S$ such that that $H^2(S, L_4 \otimes (L_2 \otimes \beta_0 \otimes \omega_S^{r})^{-2})$ vanishes for all $r = 0, \ldots, n-1$. By Serre duality,
\begin{equation}\label{serre0}
h^2(S, L_4 \otimes (L_2 \otimes \beta_0 \otimes \omega_S^{r})^{-2}) = h^0(S, \omega_S^{\, (2 + 2r)}   \otimes \beta_0^2 )\, .
\end{equation}
Let us recall now that, as a consequence of the canonical bundle formula, the canonical line bundle of a bielliptic surface is not only numerically trivial, but $\omega_S \in \Pic0 S$ (see, e.g., \cite[Corollary 1.15, at p.\! 18]{boada}).
Therefore, 
 if we  chose $\beta_0 \in \Pic0 S$ such that 
\[
\beta_0^{2} \neq  \omega_S^{\, -(2+2r)}
\] for all $r = 0, \ldots, n-1$ (this is possible as  $\Pic0 S$ is a non-trivial abelian variety under our assumption on $\Char(\K)$, see \cite[Remark 1.2, p.\! 14]{boada}), we get that the   right-hand side of \eqref{serre0} has to be $0$, because $\omega_S^{\, (2 + 2r)}    \otimes \beta_0^2 \in \Pic0 S \setminus \{\OO_S\}$.
\end{proof}

\begin{corollary}
The multiplication maps
\[
H^0(S, K_S + mL) \otimes H^0(S, K_S +nL) \to H^0(S, 2K_S + (m+n)L) 
\]
are surjective, for all $n, m \geq 3$ and $n+m \geq 7$. 
\end{corollary}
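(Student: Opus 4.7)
The plan is to reduce the surjectivity statement to a single $H^1$-vanishing on $S$ via the standard kernel-bundle trick, then push that vanishing up to the étale cover $A$ and invoke Proposition \ref{lemmap0}.

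First I would observe that the multiplication map in question is symmetric in $m$ and $n$, so we may assume without loss of generality that $m \geq n$; the hypotheses $m,n\geq 3$ and $m+n\geq 7$ then force $m\geq 4$ and $n\geq 3$. In particular both $L_m$ and $L_n$ are globally generated (Remark \ref{reider}), and the kernel bundle $M_{L_m}$ fits in the short exact sequence \eqref{MLM}. Tensoring this sequence with $L_n$ and taking global sections, the surjectivity of
\[
H^0(S, L_m) \otimes H^0(S, L_n) \to H^0(S, L_m \otimes L_n)
\]
is implied by the vanishing
\begin{equation*}
H^1(S, M_{L_m} \otimes L_n) = 0.
\end{equation*}

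Second, I would reduce this cohomological vanishing on $S$ to the analogous one on the abelian cover $A$, exactly as in \S\ref{S3}: by \eqref{split00} the sheaf $M_{L_m}\otimes L_n$ is a direct summand of $\pi_*\pi^*(M_{L_m}\otimes L_n)\simeq \pi_*(\pi^*M_{L_m}\otimes \pi^*L_n)$, and since $\pi$ is finite one has
\[
h^1(S, M_{L_m}\otimes L_n) \leq h^1(S,\, \pi_*(\pi^*M_{L_m}\otimes \pi^*L_n)) = h^1(A,\, \pi^*M_{L_m}\otimes \pi^*L_n).
\]

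Finally, under our assumption $m\geq 4$ and $n\geq 3$, Proposition \ref{lemmap0} states precisely that $\pi^*M_{L_m}\otimes \pi^*L_n$ is $IT(0)$ on $A$; in particular its first cohomology vanishes, and combining the two displayed inequalities with the kernel-bundle sequence concludes the proof. There is no real obstacle here beyond correctly invoking symmetry to ensure that the numerical hypotheses of Proposition \ref{lemmap0} are met after possibly swapping $m$ and $n$; the content of the corollary is essentially the $h=1$ instance of the vanishings already established in the projective-normality argument of \S\ref{S3.1}.
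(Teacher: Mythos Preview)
Your proof is correct and is exactly the intended deduction: the paper states the corollary immediately after Proposition \ref{lemmap0} with no separate proof, precisely because it follows by the kernel-bundle/pullback argument you wrote, together with the symmetry observation that lets one assume $m\geq 4$, $n\geq 3$. There is nothing to add.
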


\subsection{The case $\bf{k = 1}$}\label{S3.21}
Our aim is now to show that  
\begin{equation}\label{gc11}
H^1(S, M_{L_m}^{\otimes 2} \otimes L_m^h) = 0
\end{equation} for all $h \geq 1$, 
  as soon as $m \geq 4$.
Note that
\[
\pi^*M_{L_m}^{\otimes 2} \otimes \pi^*L_m^h = \pi^*(M_{L_m} \otimes L_2)  \otimes \pi^*(M_{L_m} \otimes L_2) 
\otimes \pi^*(L_m \otimes L_2^{-2}) \otimes \pi^*L_m^{(h-1)}\, . 
\]
Since $\pi^*(M_{L_m} \otimes L_2)$ is $GV$ by Proposition \ref{lemmap0}, 
we already know, by applying Proposition   \ref{presvan}, that $\pi^*M_{L_m}^{\otimes 2} \otimes \pi^*L_m^h$ is $IT(0)$ if $m > 4$, or if $m = 4$ and $h \geq 2$. Hence, as already explained, \eqref{gc11} holds in these cases. So we only need to show that $H^1(S, M_{L_4}^{\otimes 2} \otimes L_4) = 0$. We have, more generally, the following vanishings.
\begin{lemma}\label{lemmap11}
Let $m \geq 4$. Then,
 \begin{equation}\label{eqlemmap11}
H^j(S, M_{L_m}^{\otimes 2} \otimes L_4 \otimes \beta) = 0 
\end{equation}
for all $\beta \in \Pic0 S$ and $j \geq 1$.
\end{lemma}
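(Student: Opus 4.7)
The plan is to mirror the proof of Proposition \ref{lemmap0}: pull back to the abelian cover $\pi \colon A \to S$, use an SES argument together with Proposition \ref{lemmap0} to dispense with the higher cohomology, and finally reduce the one remaining case to a Castelnuovo--Mumford surjectivity on $S$. Concretely, since $M_{L_m}^{\otimes 2} \otimes L_4 \otimes \beta$ is a direct summand of $\pi_*\pi^*(M_{L_m}^{\otimes 2} \otimes L_4 \otimes \beta)$ via \eqref{split00}, and $\pi$ is finite, it suffices to prove
\[
H^j\bigl(A,\, \pi^*M_{L_m}^{\otimes 2} \otimes \pi^*L_4 \otimes \pi^*\beta\bigr) = 0 \quad (j \geq 1,\ \beta \in \Pic0 S),
\]
since varying $\beta$ over $\Pic0 S$ already covers the $\omega_S^r$-shifts in the push-forward decomposition.

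Pulling back the defining sequence of $M_{L_m}$ to $A$ and tensoring with $\pi^*M_{L_m} \otimes \pi^*L_4 \otimes \pi^*\beta$ yields, using $\pi^*\omega_S \simeq \OO_A$,
\[
0 \to \pi^*M_{L_m}^{\otimes 2} \otimes \pi^*L_4 \otimes \pi^*\beta \to H^0(S, L_m) \otimes \pi^*M_{L_m} \otimes \pi^*L_4 \otimes \pi^*\beta \to \pi^*M_{L_m} \otimes \pi^*L_{m+4} \otimes \pi^*\beta \to 0.
\]
Since $m \geq 4$ and $4, m+4 \geq 3$, Proposition \ref{lemmap0} gives that both $\pi^*M_{L_m} \otimes \pi^*L_4$ and $\pi^*M_{L_m} \otimes \pi^*L_{m+4}$ are $IT(0)$ on $A$; twisting by $\pi^*\beta$ preserves $IT(0)$ (Remark \ref{rmk3.1}), so the middle and right terms of the sequence have vanishing higher cohomology. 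The long exact sequence immediately yields the desired vanishing for $j \geq 2$ and identifies $H^1$ of the left term with the cokernel of the induced map on $H^0$, which by the projection formula and $\pi_*\OO_A = \bigoplus \omega_S^r$ is zero precisely when the multiplications
\[
H^0(S, L_m) \otimes H^0(S, M_{L_m} \otimes L_4 \otimes \gamma) \longrightarrow H^0(S, M_{L_m} \otimes L_{m+4} \otimes \omega_S \otimes \gamma)
\]
are surjective for every $\gamma \in \Pic0 S$.

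Establishing this last surjectivity is the main obstacle. I would apply Mumford's Lemma~\eqref{mumlemma} to $\FF := M_{L_m} \otimes L_4 \otimes \gamma$ with respect to a suitable globally generated ample line bundle $P$ on $S$, built from factors of the form $L_n \otimes \omega_S^s \otimes \delta$ (globally generated by Remark \ref{reider}) with $\delta \in \Pic0 S$ chosen to avoid finitely many torsion conditions; a bounded iteration of Mumford's Lemma then upgrades the surjectivity from $H^0(P)$-multiplication to $H^0(L_m)$-multiplication. The regularity conditions $H^i(S, \FF \otimes P^{-i}) = 0$ unfold, after pull-back to $A$, into Mumford's vanishing for ample line bundles on the abelian surface together with a delicate $H^2$-vanishing which, via Serre duality as in \eqref{serre0}, reduces to an $H^0$-statement controlled by placing $\delta$ off a finite union of torsion subschemes of $\Pic0 S$ (the role previously played by the single point $\beta_0$). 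The technical heart of the argument is thus the uniform control of this $H^2$-vanishing as $\gamma$ ranges over all of $\Pic0 S$, where the obstruction is strongest in the edge case $m=4$, $h=1$ (for which Proposition \ref{presvan} does not directly suffice).
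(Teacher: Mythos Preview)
Your reduction to the surjectivity of $H^0(L_m) \otimes H^0(E_m) \to H^0(L_m \otimes E_m)$ with $E_m := M_{L_m} \otimes L_4 \otimes \gamma$, together with the handling of $j \geq 2$ via Proposition~\ref{lemmap0}, matches the paper exactly. The difference lies in how that surjectivity is established. You propose to factor $L_m$ into pieces of the form $L_2 \otimes \delta_i$ and iterate Mumford's Lemma; this is viable --- and your ``uniform control as $\gamma$ varies'' worry is unfounded, since the statement is proved for each $\gamma$ separately and the auxiliary $\delta$'s may depend on $\gamma$ --- but it leaves roughly $\lfloor m/2 \rfloor$ regularity checks to be written out, which you do not carry through. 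The paper takes a shorter route: it first shows $E_m$ is \emph{globally generated} (using \cite[Corollary~4.7]{chiIy}, since $\pi^*E_m = (\pi^*M_{L_m}\otimes\pi^*L_3)\otimes\pi^*(L\otimes\beta)$ is $IT(0)$ tensor ample), forms the kernel bundle $M_{E_m}$, and reduces the desired surjectivity to $H^1(M_{E_m}\otimes L_m)=0$. This last vanishing follows once $\pi^*M_{E_m}\otimes\pi^*L_2$ is shown to be $GV$ --- a \emph{single} Mumford regularity check for $E_m$ with respect to $L_2\otimes\beta_0\otimes\omega_S^r$, whose two needed vanishings unwind precisely to the $GV$-ness of $\pi^*M_{L_m}\otimes\pi^*L_2$ already obtained in Proposition~\ref{lemmap0} and an easy $H^2(M_{L_m}\otimes\gamma)=0$ for general $\gamma$ --- after which Proposition~\ref{presvan} upgrades $GV$ to $IT(0)$ by tensoring with the ample $\pi^*((m-2)L)$. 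Beyond economy, this packaging is what makes the induction in \S\ref{S3.2} go through verbatim with $E_m$ replaced by $M_{L_m}^{\otimes i}\otimes L_{2i+2}\otimes\beta$.
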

\begin{proof}
Fix $\beta \in \Pic0 S$, and write
\[
\pi^*M_{L_m}^{\otimes 2} \otimes \pi^*L_4 \otimes \pi^*\beta = \pi^*M_{L_m} \otimes \pi^*E_m\, ,
\]
where $E_m := M_{L_m} \otimes L_4 \otimes \beta$. Since $\pi^*E_m = (\pi^*M_{L_m} \otimes \pi^*L_3) \otimes \pi^*(L \otimes \beta)$ is the tensor product of an $IT(0)$ sheaf (by Proposition \ref{lemmap0}) and an ample line bundle on $A$, both coming from $S$ as pullbacks,  one has that 
$E_m$ is globally generated thanks to \cite[Corollary 4.7]{chiIy}.\footnote{Indeed, as already observed, if $n$ is the order of $\omega_S$, then $\Z_n$ acts freely on $A$, and 
\[
\pi \colon A \to A/\Z_n \simeq S
\] 
is the quotient morphism (see, e.g.,  \cite[\S 1.4, p.\! 23]{boada}). Then, \cite[Corollary 4.7]{chiIy} (which is an equivariant version of  the main result of \cite{ppI}) says that  $\pi^*E_m$ is $\Z_n$-globally generated. This means, by definition, that the evaluation morphism
\[
H^0(A, \pi^*E_m)^{\Z_n} \otimes \OO_A \to \pi^*E_m
\] 
is surjective. Since $H^0(A, \pi^*E_m)^{\Z_n} \simeq H^0(S, E_m)$, one  gets that $H^0(S, E_m) \otimes \OO_S \to E_m$ is surjective, too.
Finally, note that the authors of \cite{chiIy} assume characteristic zero, however their proof is algebraic, and it works  well if the characteristic of the base field is coprime with $|\Z_n|$, as in our case.} 
Let us consider now the kernel bundle associated to $E_m$
\[
0 \to M_{E_m} \to H^0(S, E_m) \otimes \OO_S \to E_m \to 0\, ,
\]
and 
the multiplication map
\[
H^0(S, E_m) \otimes H^0(S, L_m) \xrightarrow{f_m} H^0(S, E_m \otimes L_m)\, .
\]
If 
\begin{equation}\label{eqlemmap21}
H^1(S, M_{E_m} \otimes L_m) = 0\, , 
\end{equation}
then $f_m$ is surjective, and using the short exact sequence defining $M_{L_m}$ twisted by $E_m$, that is, 
\begin{equation}\label{eqlemmap211}
0 \to M_{L_m} \otimes E_m \to H^0(S, L_m) \otimes E_m \to L_m \otimes E_m \to 0\, ,
\end{equation}
this implies that 
 $H^1(S, M_{L_m} \otimes E_m) = 0$, as we know that $H^1(S, E_m) = 0$.\footnote{
Indeed,   
 $\pi^*E_m$ is $IT(0)$, and this gives the desired vanishing.
}
 So,  \eqref{eqlemmap21} would imply
\eqref{eqlemmap11} with $j = 1$. On the other hand, when $j \geq 2$,  from \eqref{eqlemmap211} we get  
\[
H^{j-1}(S, L_m \otimes E_m) \to H^j(S, M_{L_m} \otimes E_m) \to H^0(S, L_m) \otimes H^j(S, E_m)\, , 
\]
and both the left-hand side and the right-hand side are $0$ by Proposition \ref{presvan}.

Now, in order to prove \eqref{eqlemmap21}, we claim that $\pi^*M_{E_m} \otimes \pi^*L_2$ is $GV$. In this way, thanks to Proposition \ref{presvan}, $\pi^*M_{E_m} \otimes \pi^*L_m$ is $IT(0)$, and hence, in particular, we obtain \eqref{eqlemmap21}. So let us  
 prove the claim: like before, we will show the existence of a certain $\beta_0 \in \Pic0 S$ such that
\begin{equation}\label{eqlemmap31}
H^1(S, M_{E_m} \otimes L_2 \otimes \beta_0 \otimes \omega_S^r) = 0
\end{equation}
for all $r = 0, \ldots, n$, as  this suffices to get the $GV$-ness of $\pi^*M_{E_m} \otimes \pi^*L_2$. 
Fix $r = 0, \ldots, n$. 
The vanishing in \eqref{eqlemmap31} follows from the surjectivity of the following multiplication map
\[
 H^0(S, E_m) \otimes H^0(S, L_2 \otimes \beta_0 \otimes \omega_S^r) \to H^0(S, E_m \otimes L_2 \otimes \beta_0 \otimes \omega_S^r)\, ,
\]
which, thanks to the Mumford's Lemma \eqref{mumlemma}, is in turn a consequence of the vanishings:
\begin{enumerate}
\item[\emph{1)}] $H^1(S, E_m \otimes (L_2 \otimes \beta_0 \otimes \omega_S^r)^{-1}) = 0$, and \\
\item[\emph{2)}] $H^2(S, E_m \otimes (L_2 \otimes \beta_0 \otimes \omega_S^r)^{-2}) = 0$.
\end{enumerate}
We just need to note that \emph{(1)} holds  as, by definition,  $E_m \otimes (L_2 \otimes \beta_0 \otimes \omega_S^r)^{-1} = M_{L_m} \otimes L_2 \otimes \left(\beta \otimes  \beta_0^{-1} \otimes \omega_S^{-r}\right)$.  Since  in the proof of Proposition \ref{lemmap0} we   showed that $H^1(S, M_{L_m} \otimes L_2 \otimes \gamma)=0$ for a general $\gamma \in \Pic0 S$, we have done as $\beta \otimes \beta_0^{-1} \otimes \omega_S^{-r} \in \Pic0 S$, and we may take $\beta_0 \in \Pic0 S$ general. 
On the other hand, \emph{(2)} holds true as well, because $E_m \otimes (L_2 \otimes \beta_0 \otimes \omega_S^r)^{-2} = M_{L_m} \otimes \left(\beta \otimes \beta_0^{-2} \otimes \omega_S^{-(1+2r)}\right)$, and $H^2(S, M_{L_m} \otimes \gamma) = 0$ for a general  $\gamma \in \Pic0 S$.    
\end{proof}

\subsection{The general case}\label{S3.2}
To conclude the proof of Theorem \ref{thmM}, we  prove the following result:
\begin{proposition}
Let $i \geq 1$ be an integer.  One has that
\begin{equation}\label{gc1}
H^1(S, M_{L_m}^{\otimes (i+1)} \otimes L_m^h) = 0\, ,
\end{equation}
  if $m \geq 2i +2$ and $h \geq 1$.
\end{proposition}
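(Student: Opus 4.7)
The plan is to argue by induction on $i$, with base case $i = 1$ being Lemma \ref{lemmap11}. The inductive step has two parts.

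First I would reduce to a single boundary case. Pulling back to $A$ and using $\pi^*K_S = 0$, one has $\pi^*(M_{L_m}^{\otimes(i+1)} \otimes L_m^h) \simeq (\pi^*M_{L_m} \otimes \pi^*L_2)^{\otimes(i+1)} \otimes \pi^*\OO_S\bigl((hm-2(i+1))L\bigr)$. By Propositions \ref{lemmap0} and \ref{presvan} the first factor is $GV$; when $hm > 2(i+1)$ the second is ample pullback, so the tensor is $IT(0)$, which kills the desired $H^1(S, \cdot)$ by the $\pi_*\OO_A$ direct-summand argument used throughout the present section. Only the case $h=1$, $m=2i+2$ requires further work, and for it I would prove the stronger twisted statement
\[
H^1(S, M_{L_m}^{\otimes(i+1)} \otimes L_{2i+2} \otimes \beta) = 0 \quad\text{for every } \beta \in \Pic0 S, \ m \geq 2i+2,
\]
which extends Lemma \ref{lemmap11} to index $i$ and whose validity at index $i-1$ plays the role of inductive hypothesis.

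Set $E_m := M_{L_m}^{\otimes i} \otimes L_{2i+2} \otimes \beta$ and write $\pi^*E_m = \pi^*(M_{L_m}^{\otimes i} \otimes L_{2i+1}) \otimes \pi^*(L \otimes \beta)$, where the first factor is an $IT(0)$-pullback (being $(\pi^*M_{L_m} \otimes \pi^*L_2)^{\otimes i} \otimes \pi^*L$, i.e., $GV \otimes \mathrm{ample}$) and the second is an ample line-bundle pullback. The equivariant global-generation criterion \cite[Cor.~4.7]{chiIy}, used already in Lemma \ref{lemmap11}, then gives that $E_m$ is globally generated on $S$; furthermore, the $IT(0)$-ness of $\pi^*E_m$ implies $H^j(S, E_m) = 0$ for all $j \geq 1$. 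Tensoring the kernel-bundle sequence for $L_m$ with $E_m$, the desired vanishing $H^1(M_{L_m} \otimes E_m) = 0$ becomes the surjectivity of the multiplication $H^0(L_m) \otimes H^0(E_m) \to H^0(L_m \otimes E_m)$, which, via the kernel bundle $M_{E_m}$, is equivalent to $H^1(M_{E_m} \otimes L_m) = 0$. To prove the latter, I would show that $\pi^*M_{E_m} \otimes \pi^*L_2$ is $GV$ on $A$; tensoring with the ample $\pi^*L_{m-2}$ then makes it $IT(0)$. Following the template of Lemma \ref{lemmap11}, pick $\beta_0 \in \Pic0 S$ and reduce this to $H^1(S, M_{E_m} \otimes L_2 \otimes \beta_0 \otimes \omega_S^r) = 0$ for $r = 0, \ldots, n-1$, which by Mumford's Lemma \eqref{mumlemma} is in turn implied by the Castelnuovo--Mumford conditions:
\begin{enumerate}
\item[$(1)$] $H^1(S, E_m \otimes L_2^{-1} \otimes \beta_0^{-1} \otimes \omega_S^{-r}) = 0$, which simplifies to $H^1(M_{L_m}^{\otimes i} \otimes L_{2i} \otimes \gamma) = 0$ for some $\gamma \in \Pic0 S$ -- precisely the inductive hypothesis;
\item[$(2)$] $H^2(S, E_m \otimes L_2^{-2} \otimes \beta_0^{-2} \otimes \omega_S^{-2r}) = 0$, which simplifies to $H^2(M_{L_m}^{\otimes i} \otimes L^{2i-2} \otimes \gamma') = 0$ for some $\gamma' \in \Pic0 S$.
\end{enumerate}

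The main technical obstacle is $(2)$: the twist $L^{2i-2}$ does not match the $L_{2i}$ of the inductive hypothesis. I would handle it via Serre duality on $S$, reducing $(2)$ to the vanishing of $H^0(S, (M_{L_m}^\vee)^{\otimes i} \otimes L^{-(2i-2)} \otimes \omega_S \otimes \gamma'^{-1})$, and then proving the latter by an $i$-step recursion on $k = 0, \ldots, i$ using the dual exact sequence $0 \to L_m^{-1} \to H^0(L_m)^\vee \otimes \OO_S \to M_{L_m}^\vee \to 0$. The base case $H^0(S, L^{-(2i-2)} \otimes \omega_S \otimes \gamma'^{-1}) = 0$ is automatic from anti-ampleness for $i \geq 2$ (and is exactly the case treated in Lemma \ref{lemmap11} for $i = 1$), while each recursion step requires the vanishing of $H^1(S, M_{L_m}^{\otimes k} \otimes L_{4i} \otimes \text{twist})$ for $k \leq i-1$: after pullback to $A$, this sheaf becomes $(\pi^*M_{L_m} \otimes \pi^*L_2)^{\otimes k} \otimes \pi^*L^{4i-2k}$, which is $GV \otimes \mathrm{ample}$ (since $4i - 2k \geq 2i+2 > 0$), hence $IT(0)$. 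This establishes $(2)$ and completes the inductive step.
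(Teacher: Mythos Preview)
Your proof is correct and follows the same architecture as the paper's: induction on $i$, reduction to the boundary case via the $IT(0)$ argument, global generation of $E_m$ via \cite[Cor.~4.7]{chiIy}, and then verifying the Castelnuovo--Mumford conditions (1) and (2) to conclude that $\pi^*M_{E_m}\otimes\pi^*L_2$ is $GV$. The only difference lies in how you dispatch (2): the paper iterates the \emph{undualized} kernel sequence to obtain $H^2(M_{L_m}^{\otimes i}\otimes L_{2i-2}\otimes\gamma)\simeq H^0(L_m)^{\otimes i}\otimes H^2(L_{2i-2}\otimes\gamma)=0$ directly (and hence needs no choice of $\beta_0$ in this step), whereas you pass through Serre duality and the dual sequence---both routes use the same $IT(0)$ inputs and are equally valid.
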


Note that, like before, since
\[
\pi^*M_{L_m}^{\otimes (i+1)} \otimes \pi^*L_m^h =\underbrace{\pi^*(M_{L_m} \otimes L_2) \otimes \ldots \otimes \pi^*(M_{L_m} \otimes L_2)}_{i+1} 
\otimes\, \pi^*(L_m^h \otimes L_2^{-(i+1)})\, , 
\]
and 
since $\pi^*(M_{L_m} \otimes L_2)$ is $GV$ by Proposition \ref{lemmap0}, 
 $\pi^*M_{L_m}^{\otimes (i+1)} \otimes \pi^*L_m^h$  
is  $IT(0)$ when $m > 2i+2$, or if $m = 2i+1$ and $h >1$, thanks to Proposition  \ref{presvan}. On the other hand, if $m = 2i+2$ and $h = 1$, it is just
 $GV$, and such fact a priori does not  suffice to get  \eqref{gc1}.
 Therefore, we want to prove, with a more direct argument, that
\begin{proposition}\label{lemmap1}
 \begin{equation}\label{eqlemmap1}
H^j(S, M_{L_m}^{\otimes (i+1)} \otimes L_{2i+2} \otimes \beta) = 0
\end{equation}
for all $\beta \in \Pic0 S$ and $j \geq 1$, if $m \geq 4$.
\end{proposition}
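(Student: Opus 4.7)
I would argue by induction on $i$, with the base case $i = 1$ being Lemma \ref{lemmap11}. Assume the statement for $i-1$ (so $H^j(S, M_{L_m}^{\otimes i} \otimes L_{2i} \otimes \tilde\beta) = 0$ for all $j \geq 1$ and $\tilde\beta \in \Pic0 S$), and prove it for $i \geq 2$. Mimicking the proof of Lemma \ref{lemmap11}, set
\[
E_m := M_{L_m}^{\otimes i} \otimes L_{2i+2} \otimes \beta\, ,
\]
so that the sheaf in \eqref{eqlemmap1} is $M_{L_m} \otimes E_m$. The first step is to verify that $E_m$ is globally generated on $S$: since $\pi^*\omega_S \simeq \OO_A$,
\[
\pi^*E_m \simeq (\pi^*M_{L_m} \otimes \pi^*L_2)^{\otimes i} \otimes \pi^*(2L \otimes \beta)
\]
is $IT(0)$ on $A$ (tensor of a $GV$ sheaf -- from Proposition \ref{lemmap0} and Proposition \ref{presvan} -- with an ample pullback line bundle), so global generation of $E_m$ follows from \cite[Corollary 4.7]{chiIy}.

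Taking cohomology of $0 \to M_{L_m} \otimes E_m \to H^0(S, L_m) \otimes E_m \to L_m \otimes E_m \to 0$, the case $j \geq 2$ of \eqref{eqlemmap1} is immediate, since both $H^{j-1}(S, L_m \otimes E_m)$ and $H^j(S, E_m)$ vanish by $IT(0)$-ness of the respective pullbacks to $A$. The case $j = 1$ reduces, via the surjectivity of the multiplication map $H^0(S, E_m) \otimes H^0(S, L_m) \to H^0(S, E_m \otimes L_m)$, to $H^1(S, M_{E_m} \otimes L_m) = 0$. The latter would follow once $\pi^*M_{E_m} \otimes \pi^*L_2$ is shown to be $GV$ on $A$, because tensoring with the ample line bundle $\pi^*(L_m \otimes L_2^{-1})$ would then yield an $IT(0)$ sheaf by Proposition \ref{presvan}.

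To establish the $GV$-ness of $\pi^*M_{E_m} \otimes \pi^*L_2$, I would follow the template of Lemma \ref{lemmap11} and look for $\beta_0 \in \Pic0 S$ such that $H^1(S, M_{E_m} \otimes L_2 \otimes \beta_0 \otimes \omega_S^r) = 0$ for all $r = 0, \ldots, n-1$. By Mumford's Lemma \eqref{mumlemma} applied to the pair $(E_m, L_2 \otimes \beta_0 \otimes \omega_S^r)$, this reduces to two vanishings. Since $L_{2i+2} \otimes L_2^{-1}$ differs from $L_{2i}$ only by a power of $\omega_S \in \Pic0 S$, the first, $H^1(S, E_m \otimes (L_2 \otimes \beta_0 \otimes \omega_S^r)^{-1}) = 0$, unwinds to $H^1(S, M_{L_m}^{\otimes i} \otimes L_{2i} \otimes \tilde\gamma) = 0$ for some $\tilde\gamma \in \Pic0 S$, which is precisely the inductive hypothesis.

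The main obstacle is the second vanishing $H^2(S, E_m \otimes (L_2 \otimes \beta_0 \otimes \omega_S^r)^{-2}) = 0$, which unwinds to $H^2(S, M_{L_m}^{\otimes i} \otimes L_{2i-2} \otimes \gamma) = 0$ for some $\gamma \in \Pic0 S$ -- note that $L_{2i-2}$ falls below the range directly covered by the inductive hypothesis. My plan is to iterate the Koszul sequence $0 \to M_{L_m} \to H^0(S, L_m) \otimes \OO_S \to L_m \to 0$: each application drops the exponent of $M_{L_m}$ by one at the cost of an $H^1$-term of the form $H^1(S, M_{L_m}^{\otimes k} \otimes L_{m + 2i - 2} \otimes \gamma)$ (for $0 \leq k \leq i-1$), which vanishes by Proposition \ref{presvan} because its pullback to $A$ is a tensor of the $GV$ sheaf $(\pi^*M_{L_m} \otimes \pi^*L_2)^{\otimes k}$ with an ample line bundle pulled back from $S$ (the exponent of $\pi^*L$ that remains after cancellation is $m + 2i - 2 - 2k \geq m \geq 4$). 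After $i$ iterations one is left with $H^2(S, L_{2i-2} \otimes \gamma)$, which by Serre duality equals $H^0(S, -(2i-2)L \otimes \gamma^{-1})^{\vee}$; this is zero since $-(2i-2)L$ is anti-ample for $i \geq 2$. Hence the second vanishing holds for any $\beta_0$, and the induction closes.
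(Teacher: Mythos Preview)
Your proposal is correct and follows essentially the same inductive strategy as the paper: define $E_m = M_{L_m}^{\otimes i}\otimes L_{2i+2}\otimes\beta$, use \cite[Corollary 4.7]{chiIy} for its global generation, reduce the $j=1$ case to the $GV$-ness of $\pi^*M_{E_m}\otimes\pi^*L_2$ via Mumford's Lemma, handle vanishing \emph{(1)} by the inductive hypothesis, and obtain vanishing \emph{(2)} by iterating the Koszul sequence down to $H^2(S,L_{2i-2}\otimes\gamma)=0$. The only cosmetic differences are that the paper takes $\beta_0=\OO_S$ (which suffices, since both regularity vanishings hold for every $\gamma\in\Pic0 S$) and, for the application of \cite[Corollary 4.7]{chiIy}, writes $\pi^*E_m$ explicitly as an $IT(0)$ sheaf tensored with the ample $\pi^*(L\otimes\beta)$ rather than a $GV$ sheaf tensored with $\pi^*(2L\otimes\beta)$; your decomposition is equivalent after shifting one factor of $\pi^*L$.
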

\begin{proof}
We argue by induction on $i$. The base step  ($i = 1$) is precisely Lemma \ref{lemmap11} above, and actually, thanks to the inductive hypothesis, the proof for $i \geq 2$ is basically the same. 
Indeed, let us write
\[
\pi^*M_{L_m}^{\otimes (i+1)} \otimes \pi^*L_{2i+2} \otimes \pi^*\beta = \pi^*M_{L_m} \otimes \pi^*E_m\, ,
\]
where $E_m := M_{L_m}^{\otimes i} \otimes L_{2i+2} \otimes \beta$.  As before, let us observe that $E_m$ is globally generated: since 
\[
\pi^*E_m = 
\left[\left(\pi^*M_{L_m} \otimes \pi^*L_3\right) \otimes \left(\pi^*M_{L_m} \otimes \pi^*L_2\right)^{\otimes (i-1)}\right] \otimes (\pi^*L_{2i+2} \otimes \pi^*L_{3+2(i-1)}^{-1} \otimes \pi^*\beta)\, ,
\] 
$\pi^*E_m$ can be written as a tensor product of an $IT(0)$ sheaf on $A$ and an ample line bundle on $A$. Indeed,
 $\pi^*M_{L_m} \otimes \pi^*L_3$ is $IT(0)$ by Proposition \ref{lemmap0} and $\left(\pi^*M_{L_m} \otimes \pi^*L_2\right)^{\otimes (i-1)}$ is $GV$ by Propositions \ref{lemmap0} and \ref{presvan}, hence their product is $IT(0)$. Moreover, $\pi^*L_{2i+2} \otimes \pi^*L_{3+2(i-1)}^{-1} \otimes \pi^*\beta = \pi^*(L \otimes \beta)$, which is ample. Therefore, as explained above (see the footnote 8 at p.\! 10),  
 $E_m$ is globally generated by \cite[Corollary 4.7]{chiIy}. 
Let us consider the multiplication map
\[
H^0(S, E_m) \otimes H^0(S, L_m) \xrightarrow{f_m} H^0(S, E_m \otimes L_m).
\]
If 
\begin{equation}\label{eqlemmap2}
H^1(S, M_{E_m} \otimes L_m) = 0\, , 
\end{equation}
then $f_m$ is surjective, and, hence, \eqref{eqlemmap1} holds true for $j = 1$, as $H^1(S, E_m) = 0$. 
The argument for $j \geq 2$ is the same as before, and we do not reproduce it here. 

In order to get \eqref{eqlemmap2}, it suffices to note  that $\pi^*M_{E_m} \otimes \pi^*L_2$ is $GV$ (hence, $\pi^*M_{E_m} \otimes \pi^*L_m$ is $IT(0)$).  
This is again an application of Mumford's Lemma:  we  simply show that
\begin{equation}\label{eqlemmap3}
H^1(S, M_{E_m} \otimes L_2   \otimes   \omega_S^r) = 0
\end{equation}
for all $r = 0, \ldots, n$, as this is enough to say that $\pi^*M_{E_m} \otimes \pi^*L_2$ is $GV$. 
Fix $r = 0, \ldots, n$. 
The vanishing in \eqref{eqlemmap3} follows from the surjectivity of the  multiplication map
\[
H^0(S, E_m) \otimes H^0(S, L_2  \otimes   \omega_S^r) \to H^0(S, E_m \otimes L_2   \otimes   \omega_S^r)\, ,
\]
which in turn follows from the vanishings:
\begin{enumerate}
\item[\emph{1)}] $H^1(S, E_m \otimes (L_2   \otimes   \omega_S^r)^{-1}) = 0$, and \\
\item[\emph{2)}] $H^2(S, E_m \otimes (L_2   \otimes  \omega_S^r)^{-2}) = 0$.
\end{enumerate}
Here we use  the inductive hypothesis. Indeed, 
 by definition,  
$E_m \otimes (L_2   \otimes  \omega_S^r)^{-1} = M_{L_m}^{\otimes i} \otimes L_{2i} \otimes \beta    \otimes \omega_S^{-r}$, and  since   $\beta    \otimes \omega_S^{-r} \in \Pic0 S$, \emph{(1)} holds true.
Moreover, \emph{(2)} holds true as well, because $E_m \otimes (L_2   \otimes \omega_S^r)^{-2} = M_{L_m}^{\otimes i} \otimes L_{2i-2} \otimes \beta  \otimes \omega_S^{-(1+2r)}$, and $H^2(S, M_{L_m}^{\otimes i} \otimes L_{2i-2} \otimes \gamma) = 0$ for  all $\gamma \in \Pic0 S$.    
This follows from  the long exact sequence in cohomology given by the short exact sequence defining $M_{L_m}$, twisted by $M_{L_m}^{\otimes (i-1)} \otimes L_{2i-2} \otimes \gamma$. More explicitly, from
\[
0 \to M_{L_m}^{\otimes i} \otimes L_{2i-2} \otimes \gamma \to H^0(S, L_m) \otimes 
M_{L_m}^{\otimes (i-1)} \otimes L_{2i-2} \otimes \gamma \to M_{L_m}^{\otimes (i-1)} \otimes L_{2i-2} \otimes L_m \otimes \gamma \to 0\, ,
\]
we get, since  $H^j(S, M_{L_m}^{\otimes (i-1)} \otimes L_{2i-2} \otimes L_m \otimes \gamma) = 0$ for all $j \geq 1$, that 
\[
H^2(S, M_{L_m}^{\otimes i} \otimes L_{2i-2} \otimes \gamma) \simeq 
H^0(S, L_m) \otimes 
H^2(S, M_{L_m}^{\otimes (i-1)} \otimes L_{2i-2} \otimes \gamma)\, .
\]
By repeating the same reasoning $i-1$ times, we finally get that the left-hand side (which is the cohomology group whose vanishing we are interested in) is isomorphic to
\[
H^0(S, L_m)^{\otimes i} \otimes 
H^2(S, L_{2i-2} \otimes \gamma) = 0\, .
\]
\end{proof}

\section{Results in higher dimension}\label{S4}

\subsection{Proof of Theorem \ref{thmY}}
We now  work in characteristic zero. Let 
$Y$ be a smooth projective varieties of dimension $d$ with $K_Y \equiv 0$, $P$ be an ample and globally generated line bundle on $Y$, and $N$ be a numerically trivial line bundle on $Y$ such that $N +P$ is globally generated. Given a positive integer $m$,  let us define 
\[
P_m := N + mP\, .
\]  

 Theorem \ref{thmY} follows from the next result, thanks to Proposition \ref{cohomcri}. 
 Let $i \geq 0$, $h \geq 1$, and $m, m' \geq d+1$.
\begin{proposition}\label{propY}
One has 
\[
H^j(Y, M_{P_m}^{\otimes (i+1)} \otimes P_{m'}^h) = 0
\]
for all $j \geq 1$, if $m' \geq i+1$.
\end{proposition}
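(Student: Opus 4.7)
The plan is to proceed by induction on $i \geq 0$, using the defining short exact sequence
$$0 \to M_{P_m} \to H^0(Y, P_m) \otimes \OO_Y \to P_m \to 0$$
tensored with $M_{P_m}^{\otimes i} \otimes P_{m'}^h$. The long exact sequence then controls $H^j(M_{P_m}^{\otimes (i+1)} \otimes P_{m'}^h)$ by cohomologies of the middle and right-hand terms.

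For the base case $i = 0$, the vanishings for $j \geq 2$ are immediate from Kodaira's theorem applied to the ample line bundles $P_{m'}^h$ and $P_m \otimes P_{m'}^h$: the assumption $K_Y \equiv 0$ means Kodaira yields $H^j(\text{ample}) = 0$ for all $j \geq 1$. For $j = 1$, one must show that the multiplication $H^0(P_m) \otimes H^0(P_{m'}^h) \to H^0(P_m \otimes P_{m'}^h)$ is surjective. I plan to establish this by factoring $P_m = (N+P) \otimes P^{m-1}$, using that $N+P$ is ample and globally generated by hypothesis. Via Kodaira one checks that $P_{m'}^h \otimes P^k$ is $0$-regular with respect to $P$ for $0 \leq k \leq m-2$, and that $P_{m'}^h \otimes P^{m-1}$ is $0$-regular with respect to $N+P$ (the conditions $hm' \geq d+1$ and $hm' + m \geq d+2$ both follow at once from $m, m' \geq d+1$). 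Mumford's Lemma then provides a chain of surjections whose composition
$$H^0(P_{m'}^h) \otimes H^0(P)^{\otimes (m-1)} \otimes H^0(N+P) \twoheadrightarrow H^0(P_m \otimes P_{m'}^h)$$
factors through $H^0(P_{m'}^h) \otimes H^0(P_m)$ via the natural multiplication, yielding what is needed.

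For the inductive step, assume the statement for $i-1$ and prove it for $i$. For $j \geq 2$ the vanishings follow from the inductive hypothesis applied to the middle term (with $P_{m'}^h$) and the right term, after rewriting $P_m \otimes P_{m'}^h$ as $P_{m_0}^{h+1} \otimes P^c$ with $m_0 = \min(m, m') \geq d+1$ and $c \geq 0$; this requires extending the inductive statement slightly to allow an extra $P^c$ factor, but such an extension goes through with the same argument. For $j = 1$ one reduces to the surjectivity of
$$H^0(P_m) \otimes H^0(M_{P_m}^{\otimes i} \otimes P_{m'}^h) \to H^0(M_{P_m}^{\otimes i} \otimes P_m \otimes P_{m'}^h),$$
which is the main obstacle. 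The plan is to reprise the base-case argument: show that $\GG_i := M_{P_m}^{\otimes i} \otimes P_{m'}^h$ is $0$-regular with respect to both $P$ and $N+P$, and apply iterated Mumford's Lemma after factoring $P_m = (N+P) \otimes P^{m-1}$. The required regularity reduces to vanishings $H^j(\GG_i \otimes P^{-j}) = 0$ and $H^j(\GG_i \otimes (N+P)^{-j}) = 0$ for $j = 1, \ldots, d$, which are extracted from the inductive hypothesis (in its generalized form) applied to the twisted sheaves $M_{P_m}^{\otimes i} \otimes (P_{m'}^h \otimes P^{-j})$. The condition $m' \geq i+1$ is crucial here: it is exactly the slack that allows each such twisted line bundle to be cast in a shape covered by the inductive hypothesis, so that the induction closes up.
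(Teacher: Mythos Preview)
Your overall strategy matches the paper's: induction on $i$, the kernel-bundle short exact sequence, and iterated Mumford's Lemma after factoring $P_m=(N+P)\otimes P^{m-1}$. The base case and the reduction of the $j=1$ vanishing to a multiplication-map surjectivity are correct. But there is a genuine gap in the regularity check of the inductive step.

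You assert that the vanishings $H^l(\GG_i\otimes P^{-l})=0$ for $l=1,\dots,d$ are ``extracted from the inductive hypothesis (in its generalized form).'' Take $h=1$, so that $\GG_i\otimes P^{-l}=M_{P_m}^{\otimes i}\otimes P_{m'-l}$. For $l=1$ this is fine: the inductive hypothesis applies because $m'-1\ge i$, which is precisely the single unit of slack granted by $m'\ge i+1$. For $l\ge 2$, however, $m'-l$ may fall below $i$ (and below $d+1$), so neither the stated nor your $P^c$-extended inductive hypothesis applies: your extension allows only $c\ge 0$, whereas here the twist is genuinely negative. The condition $m'\ge i+1$ buys one unit of slack, not $d$. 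The paper closes this gap for $l\ge 2$ by a different device: instead of invoking the inductive hypothesis on $M_{P_m}^{\otimes i}\otimes P_{m'-l}$ directly, it iterates the defining short exact sequence of $M_{P_m}$ to strip off all $i$ copies, arriving at
\[
H^l\bigl(M_{P_m}^{\otimes i}\otimes P_{m'-l}\bigr)\;\simeq\;H^0(P_m)^{\otimes i}\otimes H^l(P_{m'-l}),
\]
and the right side vanishes by Kodaira since $m'-l\ge m'-d\ge 1$. The side conditions needed at each step of this iteration---vanishing of $H^{l-1}$ and $H^l$ of $M_{P_m}^{\otimes(i-s)}\otimes P_{m'-l}\otimes P_m$---\emph{do} fall under the inductive hypothesis, because the extra $P_m$ factor restores enough positivity ($m+m'-l\ge m'$). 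This iterated reduction is the idea missing from your plan.
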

\begin{proof}
We may suppose, for simplicity,  that $h = 1$. Otherwise the proof is basically the same.
We prove it by induction on $i$. If $i = 0$, we want  $H^j(Y, M_{P_m} \otimes P_{m'}) = 0$, when $m' \geq 1$. Since, by Kodaira's vanishing, $H^j(Y, P_{m'}) = H^j(Y, K_Y \otimes  (P_{m'}  \otimes  K_Y^{-1})) = 0$ and $H^j(Y, P_m \otimes P_{m'}) = 0$, we only need to show that the multiplication map
\[
H^0(Y, P_m) \otimes H^0(Y, P_{m'}) \to H^0(Y, P_m \otimes P_{m'}) 
\]
surjects. This map sits in the following commutative diagram
\[
\xymatrix{
H^0(P_{m})  \otimes \left(H^0(N  \otimes P) \otimes \overbrace{H^0(P) \otimes \ldots \otimes H^0(P)}^{m'-1}\right) \ar[d] \ar[dr] & \\
H^0(P_m) \otimes H^0(P_{m'}) \ar[r] &H^0(P_m \otimes P_{m'})\, ,
}
\]
and hence its surjectivity follows by iteratively apply  the next Lemma to the diagonal map.
\begin{lemma}

\noindent \emph{i)} The multiplication map
\[
H^0(Y, P_m) \otimes H^0(Y, N  \otimes P) \to H^0(Y, P_{m} \otimes N  \otimes P)
\]
is surjective. 

\noindent \emph{ii)} For all $k_1, k_2 \geq 0$, the multiplication map
\[
H^0(Y, P_m  \otimes (N  \otimes P)^{k_1} \otimes P^{k_2}) \otimes H^0(Y, P) 
 \to H^0(Y, P_{m} \otimes (N  \otimes P)^{k_1} \otimes P^{k_2 + 1})
\]
is surjective.
\end{lemma}
\begin{proof}
Let us prove only (ii), assuming $k_1 = k_2= 0$ for simplicity. The desired surjectivity follows from Mumford's lemma \eqref{mumlemma}, if $H^j(Y, P_{m}  \otimes  \OO_Y(P)^{-j}) = 0$ for all $j = 1, \ldots, d$. But $P_{m}  \otimes  \OO_Y(P)^{-j} = \OO_Y(N + (m-j)P)$, and $m - j \geq m - d \geq 1$. Hence, the vanishings follows from Kodaira's.
\end{proof}

We now assume $i \geq 1$. Define
\[
E_{m, m'} := M_{P_m}^{\otimes i} \otimes P_{m'}. 
\]
Let us consider the multiplication map
\begin{equation}\label{surjY0}
H^0(Y, E_{m, m'}) \otimes H^0(Y, P_m) \to H^0(Y, E_{m, m'} \otimes P_m)\, . 
\end{equation} 
Its surjectivity would imply that $0 = H^1(Y, M_{P_m} \otimes E_{m, m'}) = H^1(Y, M_{P_m}^{\otimes (i+1)} \otimes P_{m'})$, as
 $H^1(Y, E_{m, m'}) =0$  by the inductive hypothesis. Moreover, since  one actually has $H^j(Y, E_{m, m'}) = H^j(Y, E_{m, m'} \otimes P_m) =0$ for 
all $j \geq 1$, we also get $0 = H^j(Y, M_{P_m} \otimes E_{m, m'}) = H^j(Y, M_{P_m}^{\otimes (i+1)} \otimes P_{m'})$ for all $j \geq 2$. 

So, we only need to prove the surjectivity of \eqref{surjY0}. Since it sits in the  commutative diagram
\[
\xymatrix{
H^0(E_{m, m'}) \otimes \left(H^0(N+P) \otimes \overbrace{H^0(P) \otimes \ldots \otimes H^0(P)}^{m-1}\right)  \ar[d] \ar[dr] & \\
H^0(E_{m, m'}) \otimes H^0(P_m) \ar[r] &H^0(E_{m, m'} \otimes P_m)\, ,
}
\]
we may iteratively apply the next 
\begin{lemma}
The morphisms 
\[
H^0(Y, E_{m, m'}) \otimes H^0(Y, \OO_Y(N+P)) \to H^0(Y, E_{m, m'} \otimes \OO_Y(N+P))
\]
and
\begin{multline*}
H^0(Y, E_{m, m'} \otimes \OO_Y(N+P)^{k_1} \otimes \OO_Y(P)^{k_2}) \otimes H^0(Y, \OO_Y(P)) \\ 
\to H^0(Y, E_{m, m'} \otimes \OO_Y(N+P)^{k_1} \otimes \OO_Y(P)^{k_2 + 1})
\end{multline*}
are surjective, for all $k_1, k_2 \geq 0$.
\end{lemma}
\begin{proof}
As before, we only prove the surjectivity of the second map, assuming $k_1=k_2=0$ for simplicity. So, we need to show that $H^l(Y, E_{m, m'} \otimes P^{-l}) = 0$, for all $l = 1, \ldots, d$.
  For $l = 1$, we  directly apply  the inductive hypothesis, as 
	$E_{m, m'} \otimes P^{-1} = 
M_{P_m}^{\otimes i} \otimes P_{m' - 1}$ and $m'-1 \geq i$.
	If $2 \leq l \leq d$, we may argue as follows. By definition $E_{m, m'} \otimes P^{-l} = 
M_{P_m}^{\otimes i} \otimes P_{m' - l}$. From the short exact sequence
\[
0 \to M_{P_m}^{\otimes i} \otimes P_{m' - l}  \to H^0(P_m) \otimes M_{P_m}^{\otimes (i-1)} \otimes P_{m' - l}  
\to M_{P_m}^{\otimes (i-1)} \otimes P_{m' - l}  \otimes P_m \to 0\, ,
\]
since 
\begin{equation}\label{j-1}
H^{l-1}(M_{P_m}^{\otimes (i-1)} \otimes P_{m' - l}  \otimes P_m) = H^l(M_{P_m}^{\otimes (i-1)} \otimes P_{m' - l}  \otimes P_m) = 0\, ,
\end{equation} 
it follows that
\[
H^l(M_{P_m}^{\otimes i} \otimes P_{m' - l}) \simeq H^0(P_m) \otimes H^l(M_{P_m}^{\otimes (i-1)} \otimes P_{m' - l})\, .
\]
Let us justify  \eqref{j-1}: it follows again from the inductive hypothesis, as by definition
 $M_{P_m}^{\otimes (i-1)} \otimes P_{m' - l}  \otimes P_m = M_{P_m}^{\otimes (i-1)} \otimes P_{m+ m' - l}  \otimes N$, and $m+m'-l \geq m' \geq i - 1$. 

Iterating this argument, we  get
\[
H^l(M_{P_m}^{\otimes i} \otimes P_{m' - l}) \simeq (H^0(P_m))^{\otimes i} \otimes H^l(P_{m' - l})\, ,
\]
and the last cohomology is $0$ by Kodaira's vanishing, 
as $m' > d$.
\end{proof}

This concludes the proof of Proposition \ref{propY}, and hence of Theorem \ref{thmY}.
\end{proof}

\begin{remark}\label{NEF}
Note that the same proof  works  well even if we only assume that $K_Y^{-1}$ and $N^{-1}$ are both nef, and that $N + P$ is ample and globally generated. However, this result is sometimes weaker than some of those appearing in the literature.   
\end{remark}

\subsection{Proof of Theorem \ref{thmC'}}
As already observed, the tensor product of two ample line bundles on a hyperelliptic variety as in the statement of Theorem \ref{thmC'}, is globally generated by \cite[Corollary 4.7]{chiIy}. Therefore, $N + 2L = (N + L) + L$ is globally generated for any numerically trivial line bundle $N$ on $X$. 

By repeating the argument of Theorem \ref{thmY}, with $2L$ as $P$, we see that $$m \geq \max\{2d+1, 2(k+1)\}$$ suffices to obtain the property $(N_k)$ for $N + mL$. 
 The characteristic zero hypothesis in Theorem \ref{thmY} is only used when we apply Kodaira's vanishing. But, as noted above, over an hyperelliptic variety such that $\Char(\K)$ is coprime with the cardinality of the acting group $G$, the same vanishing holds true as well. Indeed, 
\[
H^j(X, N + L) \subseteq H^j(X, \pi_*\pi^*(N+L)) = H^j(A, \pi^*(N+L)) = 0 
\] 
by \cite[Corollary p.\! 72]{mum3}, and 
by Mumford's vanishing \cite[\S 16]{mum3} if $j > 0$.

\subsection{Proof of Theorem \ref{thmC}}
The proofs of this subsection are quite similar to the previous ones in \S \ref{S3}: the only differences regarding the dimensions of our varieties are related to  the Castelnuovo-Mumford regularity. This requires a slightly finer analysis.

Let $G$ be a finite  group of cardinality $|G| = n$ acting freely, and not only by translations, on an abelian variety $A$ of dimension $d$, and let
\[
\pi \colon A \to X := A/G
\]
be the quotient morphism. We still assume that $A$ (and hence $X$) is defined over an algebraically
closed field $\K$, with $\Char(\K)$ not dividing $|G|$. If $G$ is commutative, 
one has
\[
\pi_* \OO_A = \OO_X \oplus \bigoplus_{r= 1}^{n-1} \beta_r\, ,
\]
where $\beta_r$  
 are \emph{numerically trivial} line bundles on $X$  for all $r$ (see \cite[Remark at p.\! 72, and Proposition 3 at p.\! 71]{mum3}). 
We  also assume that the Picard variety $(\Pic0 X)_{red}$
of $X$ has positive dimension.

In this situation, one can argue similarly to the surface case (\S \ref{S3}), but with more work involved.
Namely, let
\[
L_m := \OO_X(N + mL)
\]
where $N$ and $L$ are  line bundles on $X$, which are numerically trivial and ample,  respectively.
By \cite[Corollary 4.7]{chiIy}, $L_m$ is globally generated as soon $m \geq 2$. Moreover, 
note that, since $\pi^*N$ is numerically trivial on $A$ and since $A$ is an abelian variety, $\pi^*N \in \Pic0 A$, and hence it can be essentially ignored most of the time.
 The analog of Proposition  
\ref{lemmap0} holds true for $L_m$, that is,
\begin{equation}\label{lemmapd}
\pi^*(M_{L_m} \otimes L_2)\quad \textrm{is $GV$\! on $A$, if $m \geq 2d$}.
\end{equation}
Indeed, 
as the reader may notice, the proof works well in this new context, except that, instead of \eqref{mum00}, now we need to find a certain $\beta_0$ in the positive dimensional abelian variety $(\Pic0 X)_{red}$ such that, for all $r$, 
\begin{multline*}
H^1(X, L_m \otimes (L_2 \otimes \beta_0 \otimes \beta_r)^{-1}) = H^2(X, L_m \otimes (L_2 \otimes \beta_0 \otimes \beta_r)^{-2}) = \\
= \ldots = H^d(X, L_m \otimes (L_2 \otimes \beta_0 \otimes \beta_r)^{-d}) = 0\, .
\end{multline*}
This is guaranteed by a similar reasoning, as we are now assuming $m \geq 2d$. 
Let us just comment on the last vanishing when $m = 2d$. Note that $L_m \otimes (L_2 \otimes \beta_0 \otimes \beta_r)^{-d} = \beta_0^{-d} \otimes \widetilde{N}$ for a certain $\widetilde{N} \equiv 0$ on $X$, independent from $\beta_0$. One has  
\begin{equation}\label{(!!)}
H^d(X, \beta_0^{-d} \otimes \widetilde{N}) = 0
\end{equation} 
for a general $\beta_0 \in (\Pic0 X)_{red}$.  
Indeed, since  $\omega_X$ and $\widetilde{N}$ are both numerically trivial line bundle,  there exists, by \cite[Exp. XIII, Th\'eor\`eme 4.6]{kleiman}, an integer $s \geq 1$ such that $\omega_X^s$ and $\widetilde{N}^{s} \in (\Pic0 X)_{red}$. 
If   
$0 \neq h^d(X, \beta_0^{-d} \otimes \widetilde{N})
 = h^0(X, \omega_X \otimes (\beta_0^{-d} \otimes \widetilde{N})^{-1})$,  
we would have that
\[
h^0(X, (\omega_X \otimes (\beta_0^{-d} \otimes \widetilde{N})^{-1})^{\otimes s}) \neq 0\, ,
\]
as the tensor product of  non-zero sections is non-zero. 
Since  $(\omega_X \otimes (\beta_0^{-d} \otimes \widetilde{N})^{-1})^{\otimes s} = \omega_X^s \otimes \widetilde{N}^{-s} \otimes \beta_0^{ds} \in (\Pic0 X)_{red}$, and since $(\Pic0 X)_{red}$ is a positive dimensional abelian variety, we get a contradiction if $\beta_0 \in (\Pic0 X)_{red}$ is general.

In particular, if $m \geq 2d$, we get 
\[
H^1(X, M_{L_m} \otimes L_m^h) = 0
\]
for all $h \geq 1$. 

 Now, let 
 $m \geq 2d$. We have
\begin{proposition}\label{mm'}
\[
H^j(X, M_{L_m}^{\otimes 2} \otimes L_{m'} \otimes \beta) = 0
\]
for all  $j \geq 1$ and  for any numerically trivial line bundle $\beta$ on $X$, if $m' \geq 2d$.
\end{proposition}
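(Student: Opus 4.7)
The plan is to mimic the proof of Lemma~\ref{lemmap11} step by step, replacing the surface bound by $m, m' \geq 2d$ and running Castelnuovo--Mumford regularity over $j = 1, \ldots, d$. Set $E_m := M_{L_m} \otimes L_{m'} \otimes \beta$, so the target factors as $M_{L_m} \otimes E_m$. First I would check that $E_m$ is globally generated by \cite[Corollary 4.7]{chiIy}: from
\[
\pi^*E_m \;=\; \bigl(\pi^*M_{L_m} \otimes \pi^*L_3\bigr) \otimes \pi^*\bigl((m'-3)L \otimes \beta\bigr),
\]
the left factor is $IT(0)$ by \eqref{lemmapd} and Proposition~\ref{presvan} (tensor of a $GV$ sheaf with the ample $\pi^*L$), while the right factor is ample on $A$ because $m' \geq 2d \geq 4$.

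For $j = 1$ I would reduce, as in Lemma~\ref{lemmap11}, to the surjectivity of $H^0(X, E_m) \otimes H^0(X, L_m) \to H^0(X, E_m \otimes L_m)$, which (using $H^1(X, E_m) = 0$ from the $IT(0)$-ness of $\pi^*E_m$) is equivalent to $H^1(X, M_{E_m} \otimes L_m) = 0$, and this I would deduce from $\pi^*M_{E_m} \otimes \pi^*L_2$ being $GV$ on $A$. Via the splitting $\pi_*\OO_A = \bigoplus_r \beta_r$ into numerically trivial line bundles, the $GV$-ness reduces to producing a $\beta_0 \in (\Pic0 X)_{red}$ with $H^1(X, M_{E_m} \otimes L_2 \otimes \beta_0 \otimes \beta_r) = 0$ for each $r$, and by Mumford's Lemma~\eqref{mumlemma} this follows from
\[
H^j\!\bigl(X,\, E_m \otimes (L_2 \otimes \beta_0 \otimes \beta_r)^{-j}\bigr) \;=\; 0, \qquad j = 1, \ldots, d.
\]

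Rewriting each twisted sheaf as $M_{L_m} \otimes \OO_X((m'-2j)L) \otimes \gamma_{j,r}(\beta_0)$ with $\gamma_{j,r}(\beta_0)$ numerically trivial, I would dispatch the cases as follows. For $j \leq d-1$ with $m'-2j \geq 3$ the pullback is the tensor of the $GV$ sheaf $\pi^*M_{L_m} \otimes \pi^*L_2$ with an ample line bundle on $A$, hence $IT(0)$, and the vanishing is automatic. For the borderline $j = d-1$, $m' = 2d$, the pullback reads $\pi^*M_{L_m} \otimes \pi^*L_2 \otimes \alpha$ with $\alpha \in \Pic0 A$, and the kernel sequence of $M_{L_m}$ combined with Mumford's vanishing on $A$ for the ample terms $\pi^*L_m \otimes \pi^*L_2 \otimes \alpha$ and $\pi^*L_2 \otimes \alpha$ gives $H^j = 0$ for every $j \geq 2$ and every $\alpha$, with no condition on $\beta_0$. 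The delicate case is $j = d$, $m' = 2d$: the sheaf reduces to $M_{L_m} \otimes \gamma_{d,r}(\beta_0)$, and the same long-exact-sequence argument yields $H^d(A, \pi^*M_{L_m} \otimes \pi^*\gamma_{d,r}) \simeq H^0(L_m) \otimes H^d(A, \pi^*\gamma_{d,r})$, which vanishes unless $\pi^*\gamma_{d,r} = \OO_A$. This last exception is avoided, for all $n$ values of $r$ simultaneously, by choosing $\beta_0$ in the positive-dimensional abelian variety $(\Pic0 X)_{red}$ outside the finite set of points where some $\pi^*\gamma_{d,r}(\beta_0)$ becomes trivial --- an argument of the same flavour as the one used for \eqref{(!!)}.

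For $j \geq 2$ the vanishing follows directly from the kernel sequence $0 \to M_{L_m} \otimes E_m \to H^0(L_m) \otimes E_m \to L_m \otimes E_m \to 0$ together with the $IT(0)$-ness of both $\pi^*E_m$ and $\pi^*(L_m \otimes E_m)$ via Proposition~\ref{presvan}. The main obstacle I anticipate is the borderline $(j, m') = (d, 2d)$ case: orchestrating a single $\beta_0 \in (\Pic0 X)_{red}$ that simultaneously avoids the $n$ finite obstructions coming from all the characters $\beta_r$. This is precisely where the non-triviality hypothesis on $(\Pic0 X)_{red}$ in Theorem~\ref{thmC} enters, ensuring that the positive-dimensional parameter space is not exhausted by the finite bad locus.
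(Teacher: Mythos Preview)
Your proposal is correct and follows essentially the same approach as the paper: define $E_m = M_{L_m}\otimes L_{m'}\otimes\beta$, reduce the $j=1$ case to the $GV$-ness of $\pi^*M_{E_m}\otimes\pi^*L_2$ via Mumford's regularity, and settle the critical $j=d$, $m'=2d$ vanishing by choosing $\beta_0$ generically in the positive-dimensional $(\Pic0 X)_{red}$. The only cosmetic difference is that the paper carries out this last step on $X$ (via Serre duality and the argument of \eqref{(!!)}) rather than pulling back to $A$ and invoking $H^d(A,\alpha)=0$ for $\alpha\in\Pic0 A\setminus\{\OO_A\}$ as you do; both routes are valid.
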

\begin{proof}
 Given $\beta \equiv 0$ on $X$,  define 
\[
E_{m, m'} := M_{L_m} \otimes L_{m'} \otimes \beta\, .
\]
Thanks to \eqref{lemmapd}, the proof is like that of Lemma \ref{lemmap11}.  As before, the only difference is that now we have to find a $\beta_0 \in (\Pic0 X)_{red}$ such that
\begin{multline*}
H^1(X, E_{m, m'} \otimes (L_2 \otimes \beta_0 \otimes \beta_r)^{-1}) = H^2(X, E_{m, m'} \otimes (L_2 \otimes \beta_0 \otimes \beta_r)^{-2}) = \\
= \ldots = H^d(X, E_{m, m'} \otimes (L_2 \otimes \beta_0 \otimes \beta_r)^{-d}) = 0\, .
\end{multline*}
We only show the last vanishing, as the other ones follows now easily,  because $m' - 2j \geq 2$ if $j < d$.   
 By definition, 
\[
E_{m, m'} \otimes (L_2 \otimes \beta_0 \otimes \beta_r)^{-d} = M_{L_m} \otimes \OO_X((m' - 2d)L)  \otimes \beta_0^{-d} \otimes \widetilde{N}\, ,
\]
for a certain $\widetilde{N} \equiv 0$ on $X$, and  the $H^d$ of the right-hand side is $0$ for a general $\beta_0 \in \Pic0 X$. 
This  follows from the short exact sequence defining the kernel bundle $M_{L_m}$, as $H^d(X, \OO_X((m' -2d)L) \otimes \beta_0^{-d} \otimes \widetilde{N}) = 0$. 
If $m' > 2d$ this is clear. When $m' = 2d$, 
to see the vanishing of 
$H^d(X, \beta_0^{-d} \otimes \widetilde{N})$ we may argue as in \eqref{(!!)}.
\end{proof}

Therefore, by Proposition \ref{cohomcri},  $L_m$ satisfies the property $(N_1)$, if $m \geq 2d$. 

\begin{remark}
Note that, when $d \geq 3$, we might more directly say that 
\begin{equation*}
\pi^*(M_{L_m}^{\otimes 2} \otimes L_{m'})\quad \textrm{is $IT(0)$\, if $m \geq 2d$  and $m' \geq 5$}\, ,
\end{equation*}
thanks to \eqref{lemmapd} and Proposition \ref{presvan}. However, 
 unlike the surface case,  the inequality $2d \leq (2k + 2) -2$ is no longer guaranteed when $k \geq 2$. This is the only obstruction that prevents a proof by induction on $k$, as the one in \S \ref{S3}, when $d \geq 3$.  
\end{remark}

On the other hand, if $2d > 2k +2$, one has that
\[
\pi^*(M_{L_m}^{\otimes (k+1)} \otimes L_{2d}) = \underbrace{\pi^*(M_{L_m} \otimes L_{2}) \otimes \ldots \otimes \pi^*(M_{L_m} \otimes L_{2})}_{k+1} \otimes\, \pi^*L_{2d} \otimes \pi^*L_{2}^{-(k+1)}
\]
is $IT(0)$ as soon as $m \geq 2d$, by  \eqref{lemmapd} and Proposition \ref{presvan}. This concludes the proof of Theorem \ref{thmC}.

\providecommand{\bysame}{\leavevmode\hbox
to3em{\hrulefill}\thinspace}


\begin{thebibliography}{EMS}


\bibitem[AKL]{akl} D. Agostini, A.  K\"uronya and V. Lozovanu, 
{Higher syzygies of surfaces with numerically trivial canonical bundle}, 
Math. Z. \textbf{293} (2019), No. 3--4, 1071--1084.



\bibitem[ALA]{ala} R. Auffarth and G. Lucchini Arteche, 
{Smooth quotients of complex tori by finite groups}, Math. Z. \textbf{300} (2022), no. 2, 1071--1091.



\bibitem[Ba]{badescu}  L. B\u{a}descu, {\em Algebraic surfaces},
 Springer-Verlag, New York, 2001.


\bibitem[BL]{bala} P. Bangere  and J. Lacini, 
{Syzygies of adjoint linear series on projective varieties},
Duke Math. J. \textbf{174} (2025), no. 3, 473--499.


\bibitem[BCF]{bacafr} I. Bauer, F. Catanese and D. Frapporti, 
{Generalized Burniat type surfaces and Bagnera-de Franchis varieties},
 J. Math. Sci., Tokyo \textbf{22} (2015), no. 1, 55--111.

\bibitem[BDN]{bedenu} P. Belmans, A. Demleitner and P. N\'u\~nez, {The Albanese morphism for hyperelliptic varieties}, 
preprint arXiv:2411.14814 (2024).





\bibitem[Bo]{boada} D. Boada De Narvaez,  {\em Moduli of bielliptic surfaces},
Ph.D. Thesis  Technische Universit\"at M\"unchen, 2021. 

\bibitem[BM]{bomu} E. Bombieri and D. Mumford, 
{Enriques' classification of surfaces in Char p, II} in {\em Complex analysis and algebraic geometry}, pp. 23--42
Iwanami Shoten Publishers, Tokyo, 1977.

\bibitem[C1]{casurv2015} F. Catanese, {Topological methods in moduli theory}, 
Bull. Math. Sci. \textbf{5} (2015), no. 3, 287--449.

\bibitem[C2]{casurvey} F. Catanese, 
{Cyclic symmetry on complex tori and Bagnera-De Franchis manifolds},
in {\em Galois covers, Grothendieck-Teichm\"uller theory and dessins d'enfants. Interactions between geometry, topology, number theory and algebra},  Proceedings from the London Mathematical Society Midlands regional meeting and workshop, Leicester, UK, June 4-7, 2018,  Springer Proc. Math. Stat. \textbf{330}, 31--54 (2020).

\bibitem[C3]{catanese} F. Catanese, {Orbifold classifying spaces and quotients of complex tori}, Rendiconti Sem. Mat. Univ. Pol. Torino
Vol. \textbf{82}, 1 (2024), 35--46.

\bibitem[C4]{catsns} F. Catanese, {Manifolds with trivial Chern classes I: hyperelliptic manifolds and a question by Severi}, arXiv:2206.02646v3 (2023), to appear in Ann. Sc. Norm. Super. Pisa Cl. Sci.

\bibitem[C5]{catii} F. Catanese,
{Manifolds with trivial Chern classes II: manifolds omogenous to a torus product, coframed manifolds and a question by Baldassarri}, Int. Math. Res. Not. IMRN 2025, no. 5, Paper No. rnaf041, 22 pp.




\bibitem[CD]{dim3}  F. Catanese and A. Demleitner, 
{The classification of hyperelliptic threefolds}, Groups Geom. Dyn. \textbf{14} (2020), no. 4, 1447--1454.




\bibitem[Ca1]{ca} F. Caucci, {The basepoint-freeness threshold and syzygies of abelian varieties}, Algebra Number Theory \textbf{14} (2020), no. 4, 947--960.


\bibitem[Ca2]{ca2} F. Caucci, 
{Syzygies of Kummer varieties}, Trans. Amer. Math. Soc. \textbf{377} (2024), no. 2, 1357--1370.



\bibitem[CI]{chiIy} S. Chintapalli and J. N. N.  Iyer, 
{Embedding theorems on hyperelliptic varieties}, 
Geom. Dedicata \textbf{171} (2014), 249--264.

\bibitem[CHK]{clhoko} B. Claudon, A. H\"{o}ring and J. Koll\'ar, 
{Algebraic varieties with quasi-projective universal cover}, 
J. Reine Angew. Math. \textbf{679} (2013), 207--221.

\bibitem[De]{debarre} O. Debarre, On coverings of simple abelian varieties, Bull. Soc. Math. France, \textbf{134} (2006), no. 2, 253--260.

\bibitem[De]{dem} A. Demleitner, 
{The Classification of Hyperelliptic Groups in Dimension $4$}, preprint arXiv:2211.07998, (2022).

\bibitem[EL]{el} L. Ein and R. Lazarsfeld, {Syzygies and Koszul cohomology of smooth projective varieties of arbitrary dimension}, Invent. 
Math. \textbf{111} (1993), no. 1, 51--67.


\bibitem[EL]{elbook} L. Ein and R. Lazarsfeld,
{Lectures on the Syzygies and Geometry of Algebraic Varieties}, \url{https://www.math.stonybrook.edu/robert.lazarsfeld/LSGAV.Prelim.Draft.pdf}.



\bibitem[Fu]{dim2.1} A. Fujiki, 
{Finite automorphism groups of complex tori of dimension two}, Publ. Res. Inst. Math. Sci. \textbf{24} (1988), no. 1, 1--97.

\bibitem[F]{fuconj} T. Fujita, 
{On polarized manifolds whose adjoint bundles are not semipositive}, in {\em Algebraic geometry, Sendai, 1985}, 
Adv. Stud. Pure Math. \textbf{10}, North-Holland Publishing Co., Amsterdam, 1987, 167--178.

\bibitem[GP]{gapu} F. J. Gallego and B. P. Purnaprajna, 
{Projective normality and syzygies of algebraic surfaces},
J. Reine Angew. Math. \textbf{506} (1999), 145--180.

\bibitem[GKP]{grkepe} D. Greb, S. Kebekus and T. Peternell, 
{\'Etale fundamental groups of Kawamata log terminal spaces, flat sheaves, and quotients of abelian varieties}, 
Duke Math. J. \textbf{165}  (2016), no. 10, 1965--2004.


\bibitem[Gr]{green} M. Green, {Koszul cohomology and the geometry of projective varieties}, J. Differ. Geom. \textbf{19} (1984), no. 1, 125--171.




\bibitem[GL]{grla} M. Green and R. Lazarsfeld, {On the projective normality of complete linear series on an algebraic curve}, Invent. Math., \textbf{83} (1986), no. 1, 73--90.





\bibitem[It]{ito0} A. Ito, {$M$-regularity of $\Q$-twisted sheaves and its application to linear systems on abelian varieties}, Trans. Am. Math. Soc. \textbf{375}, No. 9, 6653--6673 (2022).



\bibitem[JP]{jipa} Z. Jiang and G. Pareschi, {Cohomological rank functions on abelian varieties}, 
Ann. Sci. \'Ec. Norm. Sup\'er. (4) \textbf{53} (2020), no. 4, 815--846.


\bibitem[KO]{kaok} K. Kawatani and S. Okawa, 
 {Nonexistence of semiorthogonal decompositions and sections of the canonical bundle}, preprint arXiv:1508.00682v2 (2018).




\bibitem[Ke]{ke} G. Kempf, {Projective coordinate rings of abelian varieties}, in {\em Algebraic analysis, geometry and number theory}, Johns Hopkins Univ. Press, Baltimore, MD, 1989, 225--235.

\bibitem[KL]{kola} J. Koll\'ar and M. Larsen, 
{Quotients of Calabi-Yau varieties}, in {\em Algebra, arithmetic, and geometry: in honor of Yu. I. Manin. Vol. II}, 
Progr. Math. \textbf{270}, Birkh\"auser Boston, Boston, MA, 2009, 179--211.





\bibitem[La]{lange} H. Lange, 
{Hyperelliptic varieties}, Tohoku Math. J. (2) \textbf{53} (2001), no. 4, 491--510.

\bibitem[L1]{lasampl} R. Lazarsfeld,  {A sampling of vector bundle techniques in the study of linear series}, in {\em Lectures on Riemann surfaces}, World Sci. Publ., Teaneck, NJ, 1989, 500--559.



\bibitem[L2]{laI} R. Lazarsfeld, {\em Positivity in algebraic geometry I}, Springer-Verlag, Berlin, 2004.





\bibitem[LT]{luta} S. Lu and B. Taji, 
{A characterization of finite quotients of abelian varieties}, Int. Math. Res. Not. IMRN (2018), no. 1, 292--319.





\bibitem[Mu1]{mum2} D. Mumford, {Varieties defined by quadratic equations}, in {\em Questions on Algebraic Varieties (C.I.M.E., III Ciclo, Varenna, 1969)}, Edizioni Cremonese, Rome, 1970, 29--100. 

\bibitem[Mu2]{mum3} D. Mumford, {\em Abelian varieties}, Second edition, Oxford University Press, Oxford, 1974.

\bibitem[Na]{na}  N. Nakayama, 
{Projective algebraic varieties whose universal covering spaces are biholomorphic to $\mathbb{C}^n$}, J. Math. Soc. Japan \textbf{51} (1999), no. 3, 643--654.

\bibitem[OS]{ogsa} K. Oguiso and J. Sakurai, 
{Calabi-Yau threefolds of quotient type}, Asian J. Math. \textbf{5} (2001), no. 1, 43--77.

\bibitem[Pa]{pa1} G. Pareschi,  {Syzygies of abelian varieties}, J. Amer. Math. Soc. \textbf{13} (2000), no. 3, 651--664.

\bibitem[PP1]{ppI} G. Pareschi and M. Popa, 
{Regularity on abelian varieties, I}, J. Amer. Math. Soc. \textbf{16} (2003), no. 2, 285--302.




\bibitem[PP2]{ppIII} G. Pareschi and M. Popa, {Regularity on abelian varieties III: relationship with generic vanishing
and applications}, in {\em Grassmannians, moduli spaces and vector bundles}, Clay Math. Proc., \textbf{14}, Amer.
Math. Soc., Providence, RI, 2011, 141--167.







\bibitem[Re]{reider} I. Reider, 
{Vector bundles of rank $2$ and linear systems on algebraic surfaces},
Ann. Math. (2) \textbf{127} (1988), no. 2, 309--316.


\bibitem[SGA6]{kleiman} 
{\em Th\'eorie des intersections et th\'eor\`eme de Riemann-Roch},  
S\'eminaire de G\'eom\'etrie Alg\'ebrique du Bois-Marie 1966-1967 (SGA 6). 
Dirig\'e par P. Berthelot, A. Grothendieck et L. Illusie. Avec la collaboration de D. Ferrand, J. P. Jouanolou, O. Jussila, S. Kleiman, M. Raynaud et J. P. Serre, Lecture Notes in Math., Vol. \textbf{225}, Springer-Verlag, Berlin-New York, 1971.
 


\bibitem[Se]{serrano}  F. Serrano, 
{Divisors of bielliptic surfaces and embeddings in $\P^4$}, 
Math. Z. \textbf{203} (1990), no. 3, 527--533.

\bibitem[S-B]{sb} N. Shepherd-Barron, 
{Unstable vector bundles and linear systems on surfaces in characteristic p},
Invent. Math. \textbf{106} (1991), no. 2, 243--262.

\bibitem[Sh]{sh} T. Shibata, {The classification of smooth quotients of abelian surfaces}, preprint arXiv:2308.00884v2, (2023).


\bibitem[Ue]{ueno} K. Ueno, 
{Classification of algebraic varieties, I}, 
Compos. Math. \textbf{27} (1973), 277--342.



\bibitem[Yo]{dim2.2} H. Yoshihara, 
{Quotients of abelian surfaces}, Publ. Res. Inst. Math. Sci. \textbf{31} (1995), no. 1, 135--143.








\end{thebibliography}
\end{document}